\numberwithin{equation}{section}
\newcommand{\DD}{\mathbb{D}}
\newcommand{\NN}{\mathbb{N}}
\newcommand{\RR}{\mathbb{R}}
\newcommand{\TT}{\mathbb{T}}
\newcommand{\cD}{{\mathcal{D}}}
\newcommand{\bea}{\begin{align}}
\newcommand{\eea}{\end{align}}
\newcommand{\beqa}{\begin{align*}}
\newcommand{\eeqa}{\end{align*}}
\newcommand{\ve}{{\epsilon}}
\DeclareMathOperator{\Capa}{Cap}
\title[Random interpolation in Dirichlet spaces]{Random Interpolating Sequences in Dirichlet Spaces}
\author{N. ~Chalmoukis}
\author{A.~Hartmann}
\author{K.~Kellay}
\author{B.~D.~Wick}
\address{Dipartimento di Matematica, Universit\`a di Bologna, 40126, Bologna, Italy}\email{nikolaos.chalmoukis2@unibo.it}
\address{Univ. Bordeaux, CNRS, Bordeaux INP, IMB, UMR 5251,  F-33400, Talence, France}\email{Andreas.Hartmann@math.u-bordeaux.fr}
\address{Univ. Bordeaux, CNRS, Bordeaux INP, IMB, UMR 5251,  F-33400, Talence, France}\email{Karim.Kellay@math.u-bordeaux.fr}
\address{Department of Mathematics \& Statistics, Washington University -- St. Louis, One Brookings Drive, St. Louis, MO USA 63130--4899}
\email{wick@math.wustl.edu}
\subjclass[2010]{30D05; 30E05; 30B20; 31C25}
\keywords{Interpolating sequences, separation, Carleson measure, random sequences}
\thanks{The research of the first author is supported by the fellowship INDAM-DP-COFUND-2015 "INdAM Doctoral Programme in Mathematics and/or Applications cofunded by Marie Sklodowska-Curie Actions" \# 713485. The research of the second and third authors are  partially supported by the project ANR-18-CE40-0035 and by the Joint French-Russian Research Project PRC CNRS/RFBR 2017--2019.  The research of the last author is partially supported by National Science Foundation grants DMS \# 1800057 and DMS \#1560955.}
\theoremstyle{plain}
 \newtheorem{theorem}{Theorem}[section]
 \newtheorem*{theorem*}{Theorem}
 \newtheorem{lem}[theorem]{{Lemma}}
 \newtheorem{coro}[theorem]{{Corollary}}
 \newtheorem*{definition*}{{Definition}}
\theoremstyle{definition}
\begin{document}

\maketitle

\begin{abstract}
We discuss random interpolation in weighted  Dirichlet 
spaces $\cD_\alpha$, $0\leq \alpha\leq 1$. While conditions for deterministic interpolation 
in these spaces depend on capacities which are very hard to estimate in general, we show that random interpolation is driven 
by surprisingly simple distribution conditions. As a consequence, we obtain a breakpoint
at $\alpha=1/2$ in the behavior of these random interpolating sequences showing more precisely
that almost sure interpolating sequences for $\cD_\alpha$ 
are exactly the almost sure separated sequences when $0\le \alpha<1/2$
(which includes the Hardy space $H^2=\cD_0$), and they are exactly the almost sure zero sequences  for $\cD_\alpha$ when $1/2 \leq \alpha\le 1$ (which includes the classical Dirichlet space $\cD=\cD_1$). 
%
   \end{abstract}



\section{Introduction}

Understanding interpolating sequences is an important problem in complex analysis in one and
several variables. The characterization of when a sequence of points is an interpolating sequence finds many applications to different problems in signal theory, control
theory, operator theory, etc. In classical spaces like Hardy, Fock and Bergman spaces, interpolating
sequences are now well understood objects, at least in one variable \cite{G, S, Z}. In Dirichlet spaces, it turns out however that
getting an exploitable description of such interpolating sequences is a notoriously difficult problem related to capacities. Crucial work has been undertaken in the 90s by Bishop and Marshall-Sundberg (see more
precise indications below). However, while easier checkable sufficient conditions were given by Seip in the meantime (see \cite{S}), no real progress in the understanding of these sequences has
been made since those works. In such a situation, a probabilistic approach can lead to a new vision of these interpolating sequences. 
Note that besides the Hardy and Bergman spaces, the Dirichlet space, and its weighted companions, are  among the most prominent spaces of analytic functions on the unit disk. They appear naturally in problems on classical function theory, potential theory,  as well as in operator theory when one investigates for instance weighted shifts.

Here we consider random sequences of the following kind. Let
$\Lambda(\omega)=\{\lambda_n\}$ with $\lambda_n=\rho_n {\rm e}^{i\theta_n(\omega)}$ where  $\theta_n(\omega)$ is a sequence of independent random variables, all uniformly distributed on $[0,2\pi]$  (Steinhaus sequence), and  $\rho_n \in [0,1)$ is a sequence of {\it a priori} fixed radii. 
Depending on distribution conditions on $(\rho_n)$ as will be discussed below, we ask about the probability that $\Lambda(\omega)$ is interpolating for Dirichlet spaces $\mathcal{D}_\alpha$, $0\leq\alpha\leq1$. 
Recall that the weighted Dirichlet space $\mathcal{D}_\alpha$, $0\leq \alpha\leq 1$,  
 is  the space of all analytic function $f$ on the unit disc $\DD$ such that 
 $$
 \left\Vert f\right\Vert_{\alpha}^2:=|f(0)|^2+\int_\mathbb{D}|f'(z)|^2 (1-|z|^2)^{1-\alpha}{\rm d}A(z)<\infty,$$ 
where ${\rm d}A(z)={\rm d}x{\rm d}y/\pi$ stands for  the normalized area measure on $\mathbb{D}$ (we refer to \cite{EKMR} for Dirichlet spaces). For $f(z)=\sum_{n\ge 0}a_nz^n$, $z\in\DD$, the above expression is equivalent to $\sum_{n\ge 0}(1+n)^{\alpha}|a_n|^2$.
If $\alpha=0$, $\cD_0$ is the Hardy space $H^2$, and the classical Dirichlet space $\cD$ corresponds to $\alpha=1$. 

Recall that in a Hilbert space $H$ of functions analytic in the unit disk $\DD$ equipped with a reproducing kernel $k_\lambda$, i.e. $f(\lambda)=\langle f,k_{\lambda}\rangle_H$ for every
$\lambda\in\DD$ and $f\in H$ (a so-called reproducing kernel Hilbert space), a sequence $\Lambda$ of distinct points in $\DD$ is called (universal) interpolating if
$\{(f(\lambda)/\|k_\lambda\|_H)_{\lambda\in \Lambda}:f\in H\}=\ell^2$
(for the difference between interpolating and universal interpolating sequences see below).
Concerning the deterministic case of interpolation in the classical Dirichlet space $\cD$,
in unpublished work Bishop \cite{Bi} and, independently,
Marshall-Sundberg \cite{MS} characterized the interpolating sequences. 
The first published proof was given by B\o e \cite{Bo} who  provides a unifying scheme that applies to spaces that satisfy a certain property related to the so-called Pick property (see \cite{AgM,S}), and Dirichlet spaces fall in this category. For these spaces $\Lambda$ is a (universal) interpolating sequence if and only if $\Lambda$ is $H$-separated 
(i.e $\sup_{\lambda\neq\lambda^*\in\Lambda}|\langle k_\lambda/\|k_\lambda\|_H, k_{\lambda^*}/\| k_{\lambda^*} \|_H\rangle|<\delta_\Lambda<1$) and  
\[
\mu_{\Lambda}=\sum_{\lambda\in\Lambda}\delta_\lambda/\|k_\lambda\|^2_H
\] 
is a Carleson measure for $H$ (i.e,   $\int_\DD|f|^2{\rm d}\mu\leq C_\Lambda \|f\|_H^2$). 
Recently, Aleman, Hartz, McCarthy and Richter \cite{AHMR} have shown that this characterization 
remains valid in arbitrary reproducing kernel Hilbert spaces satisfying the complete Pick property.
Stegenga \cite{St} 
characterized Carleson measures for Dirichlet spaces, but his characterization is based
on capacities which are notoriously difficult to estimate for arbitrary unions of intervals.
There are other characterizations of Carleson measures in Dirichlet spaces, see \cite{ARS3, ARS1}, as well as \cite{ARSW,EKMR} and references therein, but which are not easily interpreted geometrically
for interpolating sequences. Finally, we mention related work
by Cohn \cite{Co} based on multipliers.

The interesting feature in connection with Stegenga's result is that switching to the random setting, it turns out that his condition for unions of intervals reduces to a one-box condition involving just one
interval, and for which the capacity can be estimated. This will be of central relevance
for our discussions.

We also would like to observe that generally, when the deterministic frame does not give a full answer to a problem, or if the deterministic conditions are not so easy to check, it is interesting to look at the random situation. In particular, it is interesting to ask for conditions ensuring that a sequence picked at random is or is not interpolating almost surely (i.e., which are in a sense ``generic
situations"?). In this context, it is also worth mentioning the huge existing literature around Gaussian analytic functions which investigates the zero distribution in classes of such functions  \cite{Peres}.
\\

The problems we would like to study in this paper are inspired by results by Cochran \cite{C} and Rudowicz \cite{R} who considered random interpolation in the Hardy space. 
Since interpolation in this space is characterized by separation (in the pseudohyperbolic metric) and by the Carleson measure condition (note that the Hardy space was the pioneering space with a kernel satisfying the complete Pick property), those authors were interested in a 0-1 law for separation, see \cite{C}, and a condition for being almost surely a Carleson measure \cite{R}, which led to a 0-1 law for interpolation.
It is thus natural to discuss separation, Carleson measure type conditions and interpolation in Dirichlet spaces.  

Concerning separation in Dirichlet spaces $\cD_\alpha$, $0<\alpha<1$, this turns out to be the same as in the Hardy spaces (see \cite[p.22]{S}), so that in that case Cochran's result perfectly characterizes the situation.
The separation in the classical Dirichlet space, however, is much more delicate than in the Hardy space. We establish here a 0-1-type law for separation in $\cD$. While our proof of this fact is inspired by Cochran's ideas, it requires a careful adaptation to the metric in that space.

Concerning Carleson measure type results in Dirichlet spaces, $0<\alpha<1$, 
it is instructive to first rediscuss the situation in the Hardy space. As it turns out we are
able to improve Rudowicz' result with a simplified proof based on generating functions and Markov's inequality instead of Rudowicz's proof using Tchebychev's inequality. This proof carries over to the Dirichlet situation and allows, together with Stegenga's characterization of Carleson measures,  to discuss the results on interpolation in $\cD_{\alpha}$.
A peculiar breakpoint in the behaviour of such interpolating sequences depending on the weight $\alpha$ is now appearing: for $0\le \alpha<1/2$, almost sure separation corresponds to almost sure interpolation, while for $1/2\le \alpha\le 1$, almost sure zero sequences correspond to almost sure interpolating sequences. Observe that for the critical value $\alpha=1/2$ the behaviour is the same as in the Dirichlet space. Commenting further on this breakpoint $\alpha=1/2$, we would like to mention work by Newman and Shapiro \cite{NS} who show that if $I$ is a non-constant singular inner function, then $I\notin \cD_{1/2}$.

Since zero sequences are of some importance as we have just seen, another central ingredient of our discussion is a rather immediate adaption of Bogdan's result on almost sure zero sequences in the Dirichlet space to the case of weighted Dirichlet spaces which we add for completeness 
in an annex.
\\

As usual, the definition of interpolating sequences is based on 
the reproducing kernel  of $\cD_\alpha$:
\begin{equation}\label{RepKern}
k_z(w)=\left\{\begin{array}{lll}
\displaystyle\frac{1}{\overline{z}w}\log\frac{1}{1-\overline{z}w}&\text{ if }& \alpha=1,\\
\displaystyle \frac{1}{(1-\bar zw)^{1-\alpha}},  &\text{ if }  &0\leq \alpha<1.
\end{array}
\right.
\end{equation}

We mention that contrary to the Hardy space situation,
it turns out that in certain spaces (e.g.\ the Dirichlet space)
there exist two notions of interpolation
depending on whether the restriction operator $R_{\Lambda}:H\longrightarrow \ell^2$, $R_\Lambda f=
(f(\lambda)/\|k_\lambda\|_H)_{\lambda\in \Lambda}$ takes values in $\ell^2$
or not. Notice that $\|R_{\Lambda}f\|_{\ell^2}^2=\int_{\DD}|f|^2d\mu_{\Lambda}$.

In $\cD_{\alpha}$, $0\le\alpha\le 1$,
a sequence $\Lambda$ of distinct points in $\DD$  said to be
\begin{itemize}
\item {\it a (simple) interpolating sequence}  if  
$R_{\Lambda}:f\to   (f(\lambda)/\|k_\lambda\|_\alpha)_{\lambda\in \Lambda}$ is onto $\ell^2$,
i.e.\ the interpolation problem $f(\lambda)=a_\lambda$ has a solution $f\in \cD_\alpha$ for every sequence $(a_{\lambda})$ with $(a_\lambda/\|k_\lambda\|_\alpha)_{\lambda\in \Lambda}\in \ell^2(\Lambda)$,
\item   a {\it universal interpolating} sequence   if  it is interpolating
and moreover $R_{\Lambda}$ is well defined from $\cD_{\alpha}$ into $\ell^2$.
\end{itemize}

Sequences  which are interpolating for the Dirichlet space but not universally interpolating were discovered by Bishop in \cite{Bi}, and they were further analyzed in \cite{ARS} and \cite{Ch}.
\\

In this paper we will exclusively consider universal interpolation. 
In particular, when speaking about interpolation we mean universal interpolation.\\

We will now discuss in details the results we have obtained in this paper.

\subsection{Back to the Hardy space}

As pointed out in the introduction, before considering the situation in the Dirichlet space, it seems appropriate to re-examine the situation in the Hardy space. Recall that Cochran established a 0-1 law for (pseudohyperbolic) separation (see Theorem \ref{Cochran} below) and Rudowicz showed that 
Cochran's condition for separation 
implies almost surely the Carleson measure condition. This implies that interpolation is characterized by the condition ensuring almost sure separation. In Dirichlet spaces, however, the situation is quite different. So, in order to get a better understanding of the situation we start stating an improvement of Rudowicz' results on random Carleson measures in the Hardy space which will help to better understand the case of Dirichlet spaces.

Recall that the measure ${\rm d}\mu_{\Lambda}=\sum_{\lambda\in \Lambda}(1-|\lambda|^2)\delta_{\lambda}$
is called a Carleson measure for $H^2$ if there is a constant $C$ such that for every interval $I\subset \TT$,
\[
 \mu_{\Lambda}(S_I)\le C|I|,
\]
where $|E|$ denotes normalized Lebesgue measure on $\TT$ of a measurable
set $E$, and
\[
 S_I=\big\{z=r{\rm e}^{it}\in \DD:{\rm e}^{it}\in I, 1-|I|\le r<1\big\}
\]
is the usual Carleson window (see \cite{G}). We will prove that  a weaker condition than Rudowicz' leads to Carleson measures almost
surely in the Hardy space. 
We first need to introduce a notation: let
$$A_n=\{ \lambda\in \Lambda (\omega)\text{ : } 1-2^{-n}\leq |\lambda|< 1-2^{-n-1}\},\quad n=0,1,2\ldots 
$$
be a dydadic annulus. Then we denote by $N_n$, the number of points
of $\Lambda$ contained in $A_n$: 
$$
 N_n=\# A_n.
$$
\begin{theorem}\label{Hardy1}
Let $\beta>1$ and suppose
\[
 \sum_{n\ge 1}2^{-n}N_n^{\beta}<+\infty.
\]
Then the measure ${\rm d}\mu_{\Lambda}$ is a Carleson measure almost surely in the Hardy space.
\end{theorem}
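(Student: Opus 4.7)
The plan is a Borel--Cantelli argument over dyadic Carleson windows. Since the Carleson condition for the Hardy space is equivalent, up to an absolute constant, to the analogous condition on the dyadic arcs $I_{n,k}\subset \TT$ of length $2^{-n}$ (with $n\ge 0$ and $k=0,\dots,2^n-1$), it suffices to exhibit $C>0$ such that
\begin{equation*}
\sum_{n\ge 0}\sum_{k=0}^{2^n-1}\mathbb{P}\bigl(\mu_\Lambda(S_{I_{n,k}})>C\cdot 2^{-n}\bigr)<+\infty;
\end{equation*}
the Borel--Cantelli lemma then guarantees that almost surely only finitely many dyadic windows violate the inequality, and hence that $\mu_\Lambda$ is a.s.\ a Carleson measure.

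Fix $I=I_{n,k}$. A point $\lambda\in A_m$ can contribute to $\mu_\Lambda(S_I)$ only when $m\ge n$, and in that case the event $\{\lambda\in S_I\}$ depends solely on the uniform Steinhaus angle, has probability $|I|=2^{-n}$, and is independent across $\lambda$. Writing $a_\lambda:=1-|\lambda|^2\asymp 2^{-m}$ for $\lambda\in A_m$ and letting $Y_\lambda\sim\mathrm{Bern}(2^{-n})$ denote the corresponding independent indicators, we obtain the representation
\begin{equation*}
\mu_\Lambda(S_{I_{n,k}})=\sum_{m\ge n}\sum_{\lambda\in A_m}a_\lambda Y_\lambda,
\end{equation*}
a weighted sum of independent Bernoullis. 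Applying Markov's inequality with the $\beta$-th power,
\begin{equation*}
\mathbb{P}\bigl(\mu_\Lambda(S_{I_{n,k}})>C\cdot 2^{-n}\bigr)\le C^{-\beta}\cdot 2^{n\beta}\,\EE\!\bigl[\mu_\Lambda(S_{I_{n,k}})^\beta\bigr]
\end{equation*}
reduces the problem to a sharp estimate of this $\beta$-th moment.

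The heart of the argument is then to control $\EE[\mu_\Lambda(S_{I_{n,k}})^\beta]$ via the probability generating function of the Bernoulli sum,
\begin{equation*}
\EE\bigl[e^{t\mu_\Lambda(S_{I_{n,k}})}\bigr]=\prod_{\lambda}\bigl(1+2^{-n}(e^{ta_\lambda}-1)\bigr),
\end{equation*}
or, equivalently, through a direct combinatorial expansion of $\EE[(\sum a_\lambda Y_\lambda)^\beta]$ as a sum over set-partitions of $[\beta]$; in that expansion, the ``all distinct points inside $A_m$'' contribution produces the dominant factor $(N_m 2^{-n})^\beta\cdot 2^{-m\beta}$, which is exactly where the exponent $\beta$ required by the hypothesis appears. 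One then aims to derive an estimate of the shape
\begin{equation*}
\EE\bigl[\mu_\Lambda(S_{I_{n,k}})^\beta\bigr]\;\lesssim_\beta\;2^{-n\beta}\sum_{m\ge n}w_{n,m}\,2^{-m}N_m^\beta
\end{equation*}
with weights $w_{n,m}$ that decay geometrically in $m-n$. Substituting into the Markov bound, summing over the $2^n$ arcs at level $n$ and over $n\ge 0$, and exchanging the order of summation (Fubini), the total collapses to a constant multiple of $\sum_m 2^{-m}N_m^\beta$, which is finite by hypothesis, and Borel--Cantelli concludes.

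The main obstacle is precisely to produce the moment estimate with the correct factor $N_m^\beta$ on the right-hand side: a naive Minkowski inequality applied scale by scale only yields the factor $N_m$ or $N_m^{1/\beta}$, which is insufficient for Borel--Cantelli summability under the weak hypothesis $\sum_m 2^{-m}N_m^\beta<\infty$. The use of the generating function --- replacing Rudowicz's Chebyshev inequality in \cite{R}, which corresponds to the special case $\beta=2$ --- is what keeps this exponent sharp and permits the extension to every $\beta>1$.
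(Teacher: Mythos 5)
Your setup --- the reduction to dyadic Carleson windows, the Borel--Cantelli scheme, and the representation of $\mu_\Lambda(S_{I_{n,k}})$ as a weighted sum of independent Bernoulli variables --- coincides with the paper's, but the quantitative core of your argument has a genuine gap. Markov's inequality with the \emph{fixed} power $\beta$ can never yield a probability smaller than a constant times $\bigl(\mathbb{E}\mu_\Lambda(S_{I_{n,k}})/2^{-n}\bigr)^{\beta}\asymp\bigl(\sum_{m\ge n}2^{-m}N_m\bigr)^{\beta}$, because $\mathbb{E}[\mu_\Lambda(S_{I_{n,k}})^\beta]\ge(\mathbb{E}\mu_\Lambda(S_{I_{n,k}}))^\beta$ by Jensen. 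Under the hypothesis this tail tends to $0$, but possibly too slowly: taking $N_m\asymp 2^{m/\beta}m^{-2/\beta}$ with $1<\beta<2$ gives $\sum_m 2^{-m}N_m^\beta\asymp\sum_m m^{-2}<\infty$, yet $2^{n}\bigl(\sum_{m\ge n}2^{-m}N_m\bigr)^{\beta}\asymp 2^{n(2-\beta)}n^{-2}\to\infty$, so the union bound over the $2^n$ windows of generation $n$ is not summable and Borel--Cantelli cannot be applied. Independently of this, the bookkeeping in your final step does not close: if $w_{n,m}$ decays geometrically in $m-n$, then $\sum_{n\le m}2^{n}w_{n,m}\asymp 2^{m}$, so after Fubini the total is comparable to $\sum_m N_m^\beta$, not to $\sum_m 2^{-m}N_m^\beta$. (A further, smaller issue: the set-partition expansion of the $\beta$-th moment presupposes $\beta\in\NN$, whereas the theorem allows any real $\beta>1$.)

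The missing idea is a Chernoff-type bound with a parameter growing with the generation $n$, which is exactly what the paper's generating-function argument provides. Writing $Y_{n,k}=2^{n}\sum_{m\ge n}2^{-m}X_{n,m,k}$ with $X_{n,m,k}\sim B(2^{-n},N_m)$, one applies Markov's inequality to $s^{Y_{n,k}}$, bounds $\log G_{Y_{n,k}}(s)=\sum_{m\ge n}N_m\log(1-2^{-n}+2^{-n}s^{2^{n-m}})$ using $\log(1+x)\le x$ and $x(a^{1/x}-1)\le a$, and uses only the consequence $N_n\le 2^{(1-\ve)n}$ (for large $n$, with $\ve=1-1/\beta$) of the hypothesis. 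The scale-dependent choice $s=2^{\ve n/2}$, $A=4/\ve$ then gives $P(Y_{n,k}\ge A)\lesssim 2^{-2n}$, which beats the $2^{n}$-fold union bound at each generation. In other words, $\beta$ enters not as the order of a polynomial moment but through the pointwise bound on $N_n$ that makes the exponential moment estimate work; replacing the exponential moment by the $\beta$-th moment is precisely the step at which your proposal fails.
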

As a result, the Carleson measure condition alone is not sufficient to give a 0-1 law for interpolation in the Hardy space.

Note that Rudowicz \cite{R} showed that the above condition with $\beta=2$ is sufficient.
We will construct an example showing that it is not possible to replace $\beta$ 
by 1, so that (almost sure or not) zero sequences do not imply almost surely the 
Carleson measure condition. This makes the Hardy space a singular point in this respect 
within the
scale of weighted Dirichlet spaces, $0\le \alpha\le 1$.

\subsection{Interpolation in Dirichlet spaces $\cD_{\alpha}$, $0<\alpha<1$}


In this subsection we mention the results connected to interpolation: zero sequences, separation, and Carleson measures. 

\subsubsection{\bf Random zero sequences in Dirichlet spaces}

A central role in our interpolation results will be played by random zero sequences.
Indeed, for an interpolating sequence in the Dirichlet space it is necessary to be a
zero sequence (interpolation implies that there are functions vanishing on the whole sequence
except for one point $\lambda$, and multiplying this function by $(z-\lambda)$ yields a
nontrivial function in the Dirichlet space vanishing on the whole sequence).
We recall some results on random zero set in Dirichlet spaces. Carleson proved in \cite{Ca} that when
\begin{equation}\label{car}
\sum_{\lambda\in \Lambda} \|k_{\lambda}\|^{-2}_{\alpha}<\infty
\end{equation}
then  the Blaschke product $B$ associated to $\Lambda$  belongs to $\cD_\alpha$, $0<\alpha<1$ (for $\alpha=0$ this corresponds to the Blaschke condition for the Hardy space).  When  $\alpha=1$, 
Shapiro--Shields proved in \cite{SS} that the condition
\eqref{car}
is sufficient for $\{\lambda\}_{\lambda\in \Lambda} $ to be a  zero set for the classical Dirichlet space $\cD_1$, see also \cite[Theorem 1]{S}. Note that if $0\leq \alpha<1$ then  
$$ \sum_{\lambda\in \Lambda}\|k_{\lambda}\|^{-2}_{\alpha}\asymp\sum_{\lambda\in \Lambda}(1-|\lambda|)^{1-\alpha}\asymp \sum_n\ 2^{-(1-\alpha)n}N_n$$ and if $\alpha=1$  then $$ \sum_{\lambda\in \Lambda}\|k_{\lambda}\|^{-2}_{1}\asymp\sum_{\lambda\in \Lambda} |\log (1-|\lambda|)|^{-1}\asymp  \sum_n n^{-1}N_n.$$ On the other hand, it was proved by   Nagel--Rudin--Shapiro  in \cite{NRS}  that if $\{r_n\}\subset (0,1)$ does not satisfy \eqref{car}, then there is $\{\theta_n\}$ such that $\{r_ne^{i\theta_n}\}$ is not a zero set for $\cD_\alpha$.  Bogdan \cite[Theorem 2]{Bo} gives  a condition on the radii $|\lambda_n|$ for the sequence $\Lambda(\omega)$ to be almost surely zeros sequence for $\cD$:
\begin{equation}\label{ZeroD}
P(\Lambda(\omega) \text{ is a zero set for } \cD )=\left\{
\begin{array}{l}
1\\
0
\end{array}
\right.
\text{ if and only if } \sum_{n}n^{-1}N_n\left\{
\begin{array}{lll}
 <\infty\\
 =\infty.\\
\end{array}
\right. 
\end{equation}

Bogdan's arguments carry over to $\cD_{\alpha}$, $\alpha\in (0,1)$. For the sake of completeness,   we will prove in 
the annex, Section \ref{A}, the following result on almost sure zero sequences.

\begin{theorem}\label{alphazero}
Let $0\le \alpha<1$. Then
\begin{equation}\label{ZeroDalpha}
P(\Lambda(\omega) \text{ is a zero set for } \cD_\alpha )=\left\{
\begin{array}{l}
1\\
0
\end{array}
\right.
\text{ if and only if } \sum_{n}2^{-(1-\alpha)n}N_n \left\{
\begin{array}{lll}
 <\infty\\
 =\infty.\\
\end{array}
\right. 
\end{equation}
\end{theorem}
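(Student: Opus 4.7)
The plan is to adapt the argument of Bogdan for the classical Dirichlet space ($\alpha=1$) to the weighted setting $\cD_\alpha$ with $0\leq\alpha<1$.

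First I would set up the $0$--$1$ dichotomy. The event that $\Lambda(\omega)$ is a zero set for $\cD_\alpha$ depends on $\omega$ only through the tail of $(\theta_n)$: modifying finitely many $\theta_n$ amounts to dividing a candidate vanishing function by a finite Blaschke product with modified zeros and multiplying by the Blaschke factors at the new locations, and finite Blaschke products are multipliers of $\cD_\alpha$. By Kolmogorov's $0$--$1$ law applied to the independent Steinhaus sequence $(\theta_n)$, the probability is either $0$ or $1$, so it only remains to identify the threshold.

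For the sufficient direction, if $\sum_n 2^{-(1-\alpha)n}N_n<\infty$, then by the asymptotic equivalence recalled above we have $\sum_\lambda \|k_\lambda\|_\alpha^{-2}<\infty$, and Carleson's theorem (cited in the introduction) yields that the Blaschke product $B_\Lambda$ belongs to $\cD_\alpha$ for every realization of $(\theta_n)$. Hence $\Lambda(\omega)$ is deterministically a zero set and the relevant probability equals $1$.

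For the necessary direction, assume $\sum_n 2^{-(1-\alpha)n}N_n=\infty$. I want to prove that almost surely no nonzero $f\in\cD_\alpha$ vanishes on $\Lambda(\omega)$. Writing $f(z)=\sum_k a_k z^k$ with $\sum_k(1+k)^\alpha|a_k|^2=1$, the strategy is to exploit that, since the $\theta_n$ are independent and uniform, the random variables $f(\lambda_n)=\sum_k a_k \rho_n^k e^{ik\theta_n}$ are mutually independent with $\EE|f(\lambda_n)|^2=\sum_k|a_k|^2\rho_n^{2k}$. A moment estimate on each dyadic annulus $A_m$, together with the divergence of $\sum_m 2^{-(1-\alpha)m}N_m$, will force a Borel--Cantelli-type contradiction: with probability one, for each fixed $f$ infinitely many $|f(\lambda_n)|$ stay away from $0$.

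The main obstacle is handling the $\omega$-dependence of the would-be vanishing function $f$. Controlling a single fixed $f$ only gives $P(f\equiv 0\text{ on }\Lambda(\omega))=0$, which is insufficient to conclude since $\cD_\alpha$ is uncountable. Bogdan's key device is to work uniformly over a net in the unit ball of the space, choosing the net scale in geometric progression and calibrating it against the annular contribution $2^{-(1-\alpha)m}N_m$. The delicate step in the weighted case is to verify that the $\cD_\alpha$-norm still gives the precisely right balance between the size of the net on each scale and the probabilistic small-ball estimate on $A_m$, so that the threshold comes out to be exactly $\sum_n 2^{-(1-\alpha)n}N_n=\infty$ and not a shifted condition.
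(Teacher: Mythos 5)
Your sufficiency direction and the reduction to a tail event are fine and agree with the paper: convergence of $\sum_n 2^{-(1-\alpha)n}N_n$ is equivalent to $\sum_{\lambda}(1-|\lambda|)^{1-\alpha}<\infty$, and Carleson's theorem then puts the Blaschke product in $\cD_\alpha$ deterministically, so the probability is $1$. The problem is the necessity direction, which is where all the content lies, and there your plan has a genuine gap. You propose to fix $f$ with $\|f\|_\alpha=1$, use independence of the values $f(\lambda_n)$ and a moment bound to run Borel--Cantelli, and then pass from a single $f$ to all of $\cD_\alpha$ by a net in the unit ball ``calibrated against the annular contribution.'' Two things go wrong. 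First, a second-moment bound on $f(\lambda_n)$ gives no lower bound on $|f(\lambda_n)|$ with high probability: to force $|f(\lambda_n)|$ to stay away from $0$ infinitely often you need an anti-concentration (small-ball) estimate for the random sums $\sum_k a_k\rho_n^k e^{ik\theta_n}$, uniform over coefficient sequences in the unit ball, and you give no indication of how to obtain one. Second, the unit ball of $\cD_\alpha$ is infinite-dimensional and non-compact, so an $\varepsilon$-net at any scale is infinite; the union bound over the net competes directly against the Borel--Cantelli estimate, and the ``delicate calibration'' you defer is precisely the proof. This is also not Bogdan's device: his argument (and the paper's adaptation in the annex) never quantifies over functions at all.

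The idea you are missing is the Nagel--Rudin--Shapiro theorem (Lemma \ref{NRST} in the paper): every $f\in\cD_\alpha$ has a finite tangential limit $f^*(\zeta)$ at a.e.\ $\zeta\in\TT$ through the regions $|z-\zeta|<\kappa(1-|z|)^{1-\alpha}$. This converts the statement about all of $\cD_\alpha$ into a purely geometric one about the sequence: if there is a set of positive measure of $\zeta\in\TT$ whose tangential region contains infinitely many $\lambda_\ell$, then any $f\in\cD_\alpha$ vanishing on $\Lambda$ has $f^*=0$ on a set of positive measure, hence $f\equiv 0$. The paper therefore introduces the random arcs $F_\ell=e^{i\theta_\ell}I_\ell$ of length $(1-\rho_\ell)^{1-\alpha}$ (exactly the ``shadow'' of $\lambda_\ell$ under the tangential aperture), notes that $\sum_\ell m(F_\ell)=\sum_\ell(1-\rho_\ell)^{1-\alpha}=\infty$ and $E[m(F_j\cap F_k)]=m(I_j)m(I_k)$ for $j\neq k$, and applies the second-moment Borel--Cantelli lemma (Lemma \ref{BoCanV2}) on the circle, together with Fatou's lemma and the zero-one law, to conclude that $m(\limsup_\ell F_\ell)>0$ almost surely. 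The exponent $1-\alpha$ in the aperture of the NRS regions is what makes the threshold come out exactly at $\sum_n 2^{-(1-\alpha)n}N_n$. Without this step, or some genuine substitute for the uniformity over $f$, your outline does not close.
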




\subsubsection{\bf Interpolation in Dirichlet spaces $\cD_{\alpha}$, $0<\alpha<1$}  \label{SSIntDalpha}

As pointed out earlier, interpolation is intimately related with separation conditions
and Carleson measure type conditions. 
Recall that a sequence $\Lambda$ is called (pseudohyperbolically) separated if 
\[
\inf_{\lambda,\lambda^*\in \Lambda\atop \lambda\neq\lambda^*} \rho(\lambda,\lambda^*)=\inf_{\lambda,\lambda^*\in \Lambda\atop \lambda\neq\lambda^*} \frac{|\lambda-\lambda^*|}
 {|1-\overline{\lambda}\lambda^*|}\ge \delta_\Lambda>0.\\
\]

Since in Dirichlet spaces $\cD_{\alpha}$, $0\le \alpha <1$, the natural separation 
($\cD_\alpha$-separation) is indeed
pseudohyperbolic separation \cite[p.22]{S}, we recall Cochran's separation result on pseudohyperbolic
separation.

\begin{theorem}[Cochran]\label{Cochran}
A sequence $\Lambda(\omega)$ is almost surely (pseudohyperbolically) separated if and only if
\begin{equation} \label{CochranSep}
 \sum_n 2^{-n}N_n^2<+\infty.
\end{equation}
\end{theorem}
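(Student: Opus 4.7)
The event $\{\rho(\lambda_j,\lambda_k)<\delta\}$ depends only on the radii $\rho_j,\rho_k$ and on the angular gap $\tau=\theta_j-\theta_k$, which is uniformly distributed on $[-\pi,\pi]$. Using the identity $1-\rho(\lambda_j,\lambda_k)^2=(1-\rho_j^2)(1-\rho_k^2)/|1-\overline{\lambda}_j\lambda_k|^2$ together with $|1-\overline{\lambda}_j\lambda_k|^2=1-2\rho_j\rho_k\cos\tau+\rho_j^2\rho_k^2$, the condition $\rho(\lambda_j,\lambda_k)<\delta$ reduces to $\cos\tau>1-\varepsilon$, where
$$\varepsilon=\frac{\delta^{2}(1-\rho_j\rho_k)^{2}-(\rho_j-\rho_k)^{2}}{2\rho_j\rho_k(1-\delta^{2})}.$$
Thus the event is empty unless $|\rho_j-\rho_k|\le \delta(1-\rho_j\rho_k)$, and under this radial proximity $P(\rho(\lambda_j,\lambda_k)<\delta)\asymp\sqrt{\varepsilon}\asymp \delta(1-\rho_j\rho_k)$. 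Moreover, $|1-\overline{\lambda}_j\lambda_k|\ge 1-|\lambda_j\lambda_k|$ in the same identity forces $\rho(\lambda_j,\lambda_k)\ge 1/\sqrt{2}$ deterministically whenever $\lambda_j\in A_n$ and $\lambda_k\in A_m$ with $|n-m|$ exceeding a universal constant $C_0$. Consequently, for small $\delta$ only pairs in comparable annuli can possibly be $\delta$-close.

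\textbf{Sufficiency.} Assume $\sum_n 2^{-n}N_n^2<\infty$. A union bound over the only pairs that can contribute gives
$$P\Bigl(\inf_{j\ne k}\rho(\lambda_j,\lambda_k)<\delta\Bigr)\lesssim \sum_{|n-m|\le C_0}N_nN_m\,\delta\,2^{-\min(n,m)}\lesssim \delta\sum_n 2^{-n}N_n^2.$$
Taking $\delta=1/m$, $m\in\NN$, and intersecting the resulting full-measure events yields $P(\inf\rho=0)=0$, so $\Lambda(\omega)$ is separated almost surely.

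\textbf{Necessity.} Assume $\sum_n 2^{-n}N_n^2=+\infty$ and fix a small $\delta>0$. Partition each $A_n$ into $\asymp 1/\delta$ sub-annuli of radial width $\asymp \delta 2^{-n}$. By Cauchy--Schwarz (convexity of $x\mapsto x(x-1)/2$) the number of pairs of points of $\Lambda$ sharing a common sub-annulus is at least $c\delta N_n^{2}$, and for each such pair $|\rho_j-\rho_k|\lesssim \delta 2^{-n}$, so the one-pair estimate yields $P(\rho<\delta)\gtrsim \delta 2^{-n}$. Let $X_n$ count these close pairs and set $Y_n=\mathbf{1}_{\{X_n\ge 1\}}$. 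Then $\EE[X_n]\gtrsim \delta^{2}N_n^{2}2^{-n}$, and a direct second-moment computation---using independence of the Steinhaus angles to handle pair-events that share one or zero indices---gives $\EE[X_n^{2}]\lesssim \EE[X_n]+\EE[X_n]^{2}$. Paley--Zygmund then yields
$$P(Y_n=1)\gtrsim \min\bigl(1,\delta^{2}N_n^{2}2^{-n}\bigr).$$
The $Y_n$ are independent across $n$ (depending on disjoint blocks of the $\theta_i$'s), and under our hypothesis $\sum_n P(Y_n=1)=+\infty$. The second Borel--Cantelli lemma therefore yields infinitely many close pairs almost surely, i.e.\ $\inf\rho<\delta$ a.s. Intersecting over $\delta=1/m$ gives $\inf\rho=0$ almost surely, so $\Lambda(\omega)$ fails to be separated.

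\textbf{Main obstacle.} The delicate point is the necessity direction. The radial-proximity requirement $|\rho_j-\rho_k|\lesssim \delta(1-\rho_j\rho_k)$ is strictly stronger than membership in a common $A_n$, so the one-pair probabilities vary drastically within a single annulus. Resolving this requires the finer sub-annular partition together with the Cauchy--Schwarz lower bound on the count of same-slab pairs. After that, the Paley--Zygmund step requires a careful second-moment estimate exploiting the independence of the $\theta_i$'s, since events of the form $\{\rho(\lambda_j,\lambda_k)<\delta\}$ and $\{\rho(\lambda_j,\lambda_{k'})<\delta\}$ sharing the common index $j$ do not decouple entirely.
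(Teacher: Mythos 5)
Your proposal addresses a statement that the paper does not actually prove: Theorem \ref{Cochran} is quoted from Cochran's paper \cite{C} and used as a black box; the paper only proves the Dirichlet-metric analogue, Theorem \ref{Dsepare}. Measured against Cochran's original argument and against the paper's proof of Theorem \ref{Dsepare}, your route is correct but genuinely different in the divergence direction: there the authors invoke Cochran's exact ``uncrowded road'' formula, $1-(1-Nd/c)^{N-1}$, for the probability that some two of $N$ uniform angles on a circle of circumference $c$ come within $d$ of each other, which feeds directly into the second Borel--Cantelli lemma over the independent annuli; you replace this by a sub-annular pigeonhole count plus a Paley--Zygmund second-moment bound. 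The uncrowded-road formula is sharper and shorter; your second-moment argument is more robust and is closer in spirit to what is actually needed when radial proximity enters the separation condition nontrivially. Your decorrelation of pair-events sharing one index is exactly right: for i.i.d.\ uniform angles, $\theta_j-\theta_k$ and $\theta_k-\theta_{k'}$ are independent, so in fact $\EE[X_n^2]\le\EE[X_n]+\EE[X_n]^2$ with constant $1$.

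A few small points should be patched, none fatal. First, the pigeonhole bound $\sum_i\binom{k_i}{2}\ge c\,\delta N_n^2$ requires $N_n\gtrsim 1/\delta$; this is harmless because $\sum_{n:\,N_n<1/\delta}2^{-n}N_n^2\le\delta^{-2}\sum_n2^{-n}<\infty$, so the divergence is carried by the annuli with $N_n\ge 1/\delta$, but it must be said. Second, in the sufficiency step the phrase ``intersecting the resulting full-measure events'' is backwards: the events $\{\inf\rho\ge 1/m\}$ are not individually of full measure; rather $P(\inf\rho<1/m)\lesssim m^{-1}\sum_n 2^{-n}N_n^2\to 0$, so their \emph{union} over $m$ has probability one. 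Third, the two-sided estimate $P(\rho(\lambda_j,\lambda_k)<\delta)\asymp\delta(1-\rho_j\rho_k)$ holds only under the slightly stronger proximity $|\rho_j-\rho_k|\le\frac{1}{2}\delta(1-\rho_j\rho_k)$ (the upper bound holds always, which is all the union bound needs), and the finitely many points in the first few annuli, where $\rho_j\rho_k$ may be small, require the separate trivial remark that finitely many a.s.\ distinct points have a.s.\ positive mutual pseudohyperbolic distance.
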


We should pause here to make a crucial observation. We have already mentioned that 
interpolating sequences are necessarily zero-sequences.
Also separation is another necessary condition for interpolation. Now the condition
for zero sequences \eqref{ZeroDalpha} depends on $\alpha$ while the separation condition
does not, and it follows that depending on $\alpha$, it is one condition or the other which is 
dominating. From \eqref{ZeroDalpha} and \eqref{CochranSep} it is not difficult to see that 
this breakpoint is exactly at $\alpha=1/2$ (for $\alpha=1/2$, \eqref{ZeroDalpha} still
implies \eqref{CochranSep}). This motivates already the necessary conditions
of our central Theorem \ref{CoroThm5} below. 
Another ingredient of that result comes from Carleson measures which have been characterized by Stegenga using capacitory conditions. For these we have the following result which is in the spirit of Theorem \ref{Hardy1} in the Hardy space.

\begin{theorem}\label{CarlAlpha1} 
Let $0<\alpha<1$. 

If 
\begin{equation}\label{ConditionDalpha1}
 \displaystyle\sum_n 2^{- (1-\alpha)n}N_{n}<\infty
\end{equation}
 then  $\mu_{\Lambda}=\sum_{\lambda\in\Lambda}(1-|\lambda|^2)^{1-\alpha}\delta_{\lambda}$ is almost surely a Carleson measure for $\cD_\alpha$.
\end{theorem}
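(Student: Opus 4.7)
The strategy mirrors the one used for Theorem~\ref{Hardy1} in the Hardy case, but with two key additions: (i) we weight each point of $\Lambda$ by $(1-|\lambda|^2)^{1-\alpha}$ instead of $(1-|\lambda|^2)$, and (ii) we invoke Stegenga's capacitary characterization of Carleson measures for $\cD_\alpha$, arguing that in the random setting the multi-arc condition reduces to a single-box condition $\mu_\Lambda(S_I) \lesssim |I|^{1-\alpha}$, since $\Capa_\alpha(I) \asymp |I|^{1-\alpha}$ for a single arc when $0<\alpha<1$.

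\textbf{Step 1: Tail bound for a single dyadic arc.} Fix an arc $I\subset \TT$ of length $2^{-k}$. For $\lambda\in A_n$ we have $\lambda\in S_I$ only if $n\ge k$, and then $P(\lambda\in S_I)=|I|=2^{-k}$ because $\theta_n$ is uniformly distributed. Since $(1-|\lambda|^2)^{1-\alpha}\asymp 2^{-(1-\alpha)n}$ on $A_n$,
\[
 \mu_\Lambda(S_I)\;\asymp\;\sum_{n\ge k}2^{-(1-\alpha)n}X_n,
\]
where $X_n$ is $\mathrm{Bin}(N_n,2^{-k})$ and the $X_n$ are independent. Using $1+x\le e^x$, the moment generating function satisfies
\[
 \EE\!\left[e^{s\mu_\Lambda(S_I)}\right]\;\le\;\exp\!\left(2^{-k}\sum_{n\ge k}N_n\bigl(e^{s\,2^{-(1-\alpha)n}}-1\bigr)\right).
\]
Choosing $s$ of size $2^{(1-\alpha)k}$ and exploiting $\sum_n 2^{-(1-\alpha)n}N_n<\infty$, Markov's inequality yields an estimate of the form $P(\mu_\Lambda(S_I)>C\,2^{-k(1-\alpha)})\le \varepsilon_k$ with $\varepsilon_k$ going to $0$ fast enough (after a sharpening that uses the tail $\sum_{n\ge k}2^{-(1-\alpha)n}N_n\to 0$) to absorb the union-bound cost in Step~2.

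\textbf{Step 2: Borel--Cantelli over dyadic arcs.} There are $2^k$ dyadic arcs of length $2^{-k}$. A union bound plus Step~1 gives a probability $\le 2^k\varepsilon_k$ that at level $k$ some dyadic arc violates the one-box bound. Once this is summable in $k$, Borel--Cantelli yields almost surely the uniform one-box condition $\mu_\Lambda(S_I)\le C|I|^{1-\alpha}$ for all (not only dyadic) arcs, by absorbing a general arc into $O(1)$ neighbouring dyadic arcs.

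\textbf{Step 3: From one-box to Stegenga's condition.} By Stegenga, $\mu_\Lambda$ is $\cD_\alpha$-Carleson iff $\mu_\Lambda(S_E)\lesssim \Capa_\alpha(E)$ for every finite disjoint union of arcs $E$. Decompose $E=\bigsqcup_j I_j$ into its constituent arcs, apply Step~2 to each $I_j$, and add up:
\[
 \mu_\Lambda(S_E)\;\le\;\sum_j \mu_\Lambda(S_{I_j})\;\le\;C\sum_j |I_j|^{1-\alpha}\;\lesssim\;\Capa_\alpha(E),
\]
where the last step uses that for $0<\alpha<1$ the Riesz-type capacity is comparable to $\sum_j |I_j|^{1-\alpha}$ on finite unions of arcs (the one-box reduction advertised in the introduction).

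\textbf{Expected main obstacle.} The crux is Step~1: tuning the MGF/Markov parameters so that the tail estimate $\varepsilon_k$ beats the $2^k$ cost in Step~2. A naive choice of $s$ does not yield decay, because the expected mass of $S_I$ is of order $2^{-k}$ whereas we want $\mu_\Lambda(S_I)\le 2^{-k(1-\alpha)}$, i.e.\ we ask for an excess factor $2^{\alpha k}$ over the mean. Overcoming this requires a delicate two-regime splitting of the sum $\sum_{n\ge k}N_n(e^{s\,2^{-(1-\alpha)n}}-1)$ (small $n$ close to $k$ vs.\ large $n$), and leveraging the tail smallness of $\sum_{n\ge k}2^{-(1-\alpha)n}N_n$ implied by the hypothesis. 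The Stegenga reduction in Step~3 is the other subtle point but, as suggested in the introduction, is essentially automatic once the one-box bound is available almost surely.
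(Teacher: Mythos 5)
Your Steps 1--2 are essentially the paper's argument: one bounds the generating function of the binomial counts $X_{n,m,k}$, applies Markov's inequality with $s$ at scale $2^{(1-\alpha)k}$ (in the paper's normalization $Y^{\alpha}_{n,k}=2^{(1-\alpha)n}\sum_{m}2^{-(1-\alpha)m}X_{n,m,k}$ and $s=2^{\alpha n}$, $A=4/\alpha$), uses that the tail $c_k=\sum_{m\ge k}2^{-(1-\alpha)m}N_m\to 0$ by the hypothesis, gets $P(Y^{\alpha}_{n,k}\ge A)\lesssim 2^{-2n}$, and finishes with Borel--Cantelli against the $2^n$ dyadic arcs. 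The ``delicate two-regime splitting'' you anticipate is not needed: the single elementary inequality $x(a^{1/x}-1)\le a$ for $x\ge 1$, $a>0$, applied with $x=2^{(1-\alpha)(m-n)}$, controls the whole sum in one stroke and produces exactly the factor $2^{-\alpha n}s\,c_n$ that makes the choice $s=2^{\alpha n}$ work.

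Step 3, however, contains a genuine error. The inequality $\sum_j|I_j|^{1-\alpha}\lesssim\Capa_\alpha\bigl(\bigcup_j I_j\bigr)$ is false: capacity is \emph{sub}additive, so the comparison goes the other way, and for many short well-separated arcs $\sum_j|I_j|^{1-\alpha}$ can be arbitrarily large while $\Capa_\alpha\bigl(\bigcup_j I_j\bigr)\le\Capa_\alpha(\TT)$ stays bounded (take $n$ equally spaced arcs of length $\ell$ with $n\ell^{1-\alpha}\to\infty$ but $n\ell\to 0$). So summing the one-box bounds over the constituent arcs does not yield Stegenga's condition. The paper's reduction is different: since the $\theta_n$ are i.i.d.\ uniform, the law of $\sum_j\mu_\Lambda(S_{I_j})$ for disjoint arcs depends only on the lengths $|I_j|$, so one may rotate the arcs to be adjacent; then $\bigcup_j S_{I_j}\subset S_I$ where $I$ is the \emph{single} arc of length $|I|=\sum_j|I_j|$, the one-box bound gives $\mu_\Lambda(S_I)\lesssim|I|^{1-\alpha}=\bigl(\sum_j|I_j|\bigr)^{1-\alpha}\asymp\Capa_\alpha(I)$, and finally one invokes the ``spreading increases capacity'' inequality $\Capa_\alpha(I)\le c\,\Capa_\alpha\bigl(\bigcup_j I_j\bigr)$ for $|I|=\sum_j|I_j|$ (see \cite[Theorem~2.4.5]{EKMR} for $\alpha=1$; the case $0<\alpha<1$ is analogous). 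Note that $\bigl(\sum_j|I_j|\bigr)^{1-\alpha}$ can be much smaller than $\sum_j|I_j|^{1-\alpha}$, which is precisely why the paper's route succeeds where your Step 3 fails. You should replace Step 3 by this rotation-plus-spreading argument.
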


Observe that contrarily to the Hardy space, where we had to pick $\beta>1$, here the exponent is exactly $(1-\alpha)$. We obtain a similar result in the classical Dirichlet space. So, in view of Theorem \ref{alphazero}, we can  deduce that almost sure zero sequences give rise  to almost sure Carleson measures in $\cD_{\alpha}$, $0<\alpha\le 1$ (which in particuliar includes the classical Dirichlet space).

We can now state our first main result.
\begin{theorem}\label{CoroThm5}
$\text{ }$

\begin{itemize}

\item[(i)] Let $0< \alpha< 1/2$.
Then
\[
 P\big({\Lambda}(\omega) \text{  is interpolating for $\cD_\alpha$} \big)=\left\{
\begin{array}{l}
1\\
0
\end{array}
\right.
\text{ if and only if } \sum_n 2^{- n}N_{n}^{2}\left\{
\begin{array}{lll}
 <\infty\\
 =\infty.\\
\end{array}
\right. 
\]

\item[(ii)] Let $1/2\leq\alpha<1$. Then
\[
 P\big({\Lambda}(\omega) \text{  is interpolating for $\cD_\alpha$} \big)=\left\{
\begin{array}{l}
1\\
0
\end{array}
\right.
\text{ if and only if } \sum_n 2^{- (1-\alpha)n}N_{n} \left\{
\begin{array}{lll}
 <\infty\\
 =\infty.\\
\end{array}
\right. 
\]

\end{itemize}
\end{theorem}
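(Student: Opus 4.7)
My approach is to assemble three inputs already at our disposal: (a) the characterization, valid in every complete Pick space and in particular in $\cD_\alpha$, that $\Lambda$ is (universal) interpolating if and only if it is $\cD_\alpha$-separated and $\mu_\Lambda$ is a Carleson measure for $\cD_\alpha$, combined with the fact (recorded in \cite[p.~22]{S}) that for $0\le\alpha<1$ this separation coincides with pseudohyperbolic separation; (b) Cochran's 0--1 law (Theorem~\ref{Cochran}) and the sufficient Carleson criterion (Theorem~\ref{CarlAlpha1}); (c) the zero-set dichotomy (Theorem~\ref{alphazero}), together with the standard fact that an interpolating sequence is always a zero sequence. The breakpoint $\alpha=1/2$ will emerge entirely from comparing the two series $\sum_n 2^{-n}N_n^2$ and $\sum_n 2^{-(1-\alpha)n}N_n$, which switch in relative strength exactly there.

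For necessity, if $\alpha<1/2$ and $\sum_n 2^{-n}N_n^2=\infty$, then Cochran's theorem yields that $\Lambda(\omega)$ is almost surely not separated, hence almost surely not interpolating. If $1/2\le\alpha<1$ and $\sum_n 2^{-(1-\alpha)n}N_n=\infty$, then Theorem~\ref{alphazero} yields that $\Lambda(\omega)$ is almost surely not a zero sequence for $\cD_\alpha$, hence again not interpolating. For sufficiency, I must show that in each case the single hypothesis given forces both separation and the Carleson condition to hold almost surely. In case (i), a Cauchy--Schwarz estimate
\[
\Bigl(\sum_n 2^{-(1-\alpha)n} N_n\Bigr)^{\!2}\;\le\; \Bigl(\sum_n 2^{-n} N_n^2\Bigr)\Bigl(\sum_n 2^{(2\alpha-1)n}\Bigr)
\]
converts Cochran's hypothesis into the hypothesis of Theorem~\ref{CarlAlpha1}, since $2\alpha-1<0$ makes the last factor finite. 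Cochran's theorem then gives almost sure separation, Theorem~\ref{CarlAlpha1} gives the almost sure Carleson property, and the Pick characterization concludes. In case (ii), summability of $\sum_n 2^{-(1-\alpha)n}N_n$ forces $N_n=O(2^{(1-\alpha)n})$, whence
\[
2^{-n}N_n^2\;\le\;C\,2^{-\alpha n}N_n\;\le\;C\,2^{-(1-\alpha)n}N_n,
\]
the last inequality using $\alpha\ge 1-\alpha$; summing recovers Cochran's condition, after which the same assembly applies.

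I do not expect any substantive obstacle beyond applying these two elementary comparisons on the correct side of $\alpha=1/2$ and matching each divergence statement with its dominant necessary condition (separation for $\alpha<1/2$, zero sequence for $\alpha\ge 1/2$). Once that matching is in place, the theorem is a direct composition of the three inputs above; the real mathematical work already sits inside Theorem~\ref{Cochran}, Theorem~\ref{CarlAlpha1} and Theorem~\ref{alphazero}.
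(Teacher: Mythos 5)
Your proposal is correct and follows essentially the same route as the paper: necessity comes from separation (Cochran's 0--1 law) when $0<\alpha<1/2$ and from the zero-set dichotomy (Theorem~\ref{alphazero}) when $1/2\le\alpha<1$, while sufficiency derives both the almost sure separation and the almost sure Carleson condition from the single given series and then invokes the separated-plus-Carleson characterization of universal interpolation. The only cosmetic difference is that you use Cauchy--Schwarz where the paper uses the pointwise bound $N_n\lesssim 2^{n/2}$ (and you make explicit the elementary comparison the paper calls ``clear'' in case (ii)); the two arguments are equivalent.
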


An interesting reformulation of the above results connects random interpolation with 
random zero sequences and random separated sequences as stated in the following
corollary.

\begin{coro}\label{zoub} The following statements hold:
\begin{enumerate}
\item  Let $0\leq \alpha<1/2$.  The sequence $\Lambda(\omega)$ is almost surely interpolating for $\cD_{\alpha}$ if and only if it is almost surely separated.
\item  Let $1/2\leq\alpha<1$.  The sequence $\Lambda(\omega)$ is almost surely interpolating for $\cD_{\alpha}$ if and only if it is almost surely a zero sequence.
\end{enumerate}
\end{coro}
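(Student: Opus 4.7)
The plan is to observe that this corollary is nothing more than a juxtaposition of Theorem \ref{CoroThm5} with the two 0-1 laws that have already been recorded in the excerpt, namely Cochran's Theorem \ref{Cochran} for separation and Theorem \ref{alphazero} for zero sequences. Each of these three theorems characterizes the corresponding almost-sure property by an explicit convergence condition on the radii $(\rho_n)$, phrased only through the dyadic counting function $N_n$. Since the randomness enters only through the angles $\theta_n(\omega)$, the conditions on the radii are deterministic events of probability $0$ or $1$, and two almost-sure properties coincide as soon as the governing radius conditions coincide. The whole argument therefore reduces to matching conditions, and the main obstacle, if any, is bookkeeping at the endpoint $\alpha=0$ which is not directly covered by Theorem \ref{CoroThm5}.

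For part (1), I would fix $0<\alpha<1/2$ and simply compare: Theorem \ref{CoroThm5}(i) says that almost sure interpolation in $\cD_\alpha$ is governed by
\[
\sum_n 2^{-n}N_n^2<\infty,
\]
which is exactly the condition \eqref{CochranSep} in Cochran's Theorem \ref{Cochran}. Hence the events ``$\Lambda(\omega)$ is almost surely interpolating for $\cD_\alpha$'' and ``$\Lambda(\omega)$ is almost surely separated'' are characterized by the same event on the radii, and therefore agree.

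The case $\alpha=0$ is slightly outside the scope of Theorem \ref{CoroThm5} and must be handled by hand. I would recall that in $H^2$ interpolation is equivalent to separation together with the Carleson measure condition. One direction is immediate: interpolating sequences are always separated, so almost sure interpolation forces almost sure separation. For the converse, Cochran's condition $\sum 2^{-n}N_n^2<\infty$ gives almost sure separation, and Theorem \ref{Hardy1} applied with $\beta=2$ then yields that $\mu_\Lambda$ is almost surely a Carleson measure for $H^2$; combining the two gives almost sure interpolation in $H^2$. This closes part (1).

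For part (2), I would fix $1/2\le \alpha<1$ and again compare the conditions. Theorem \ref{CoroThm5}(ii) characterizes almost sure interpolation in $\cD_\alpha$ by
\[
\sum_n 2^{-(1-\alpha)n}N_n<\infty,
\]
which is precisely condition \eqref{ZeroDalpha} in Theorem \ref{alphazero} governing almost sure zero sequences for $\cD_\alpha$. Hence the two almost-sure properties are equivalent. Note that this comparison also clarifies the role of the breakpoint $\alpha=1/2$: for $\alpha<1/2$ the separation condition $\sum 2^{-n}N_n^2<\infty$ is strictly stronger than the zero-set condition $\sum 2^{-(1-\alpha)n}N_n<\infty$ and therefore controls interpolation, whereas for $\alpha\ge 1/2$ the reverse inequality holds and the zero-set condition is the one that survives.
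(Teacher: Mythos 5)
Your proposal is correct and matches the paper's (implicit) argument: the corollary is presented there as an immediate reformulation obtained by matching the condition in Theorem \ref{CoroThm5} with the 0-1 laws of Theorem \ref{Cochran} and Theorem \ref{alphazero}. Your explicit treatment of the endpoint $\alpha=0$ via Theorem \ref{Hardy1} with $\beta=2$ fills in a case the paper only handles through its introductory discussion of Cochran and Rudowicz, and is sound.
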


Observe that for $\alpha=1/2$, the condition for being almost surely a zero sequence is strictly stronger than the condition for being almost surely separated so that in the limit case $\alpha=1/2$ it is indeed the almost sure zero condition that drives the situation.

\subsection{\bf Interpolation in the classical Dirichlet space}

For the classical Dirichlet space we will first establish a result on separation, and 
then use again the fact that in the random situation Stegengas capacitory condition on unions of intervals reduces to a single interval.

\subsubsection{\bf Separation in the  Dirichlet space} In the case $\alpha=1$, 
the separation is given in a different way.
 Let 
$$\rho_\cD(z,w)=\sqrt{
1-\frac{|k_w(z)|^2}{k_z(z)k_w(w)}}, \qquad z,w\in \DD.$$

A sequence $\Lambda$ is called $\cD$--separated  if
$$\inf_{\lambda,\lambda^*\in \Lambda\atop \lambda\neq\lambda^*} \rho_\cD(\lambda,\lambda^*)>\delta_\Lambda>0$$
for some $\delta_\Lambda<1$. This is equivalent to (see \cite[p.23]{S})
\begin{equation}\label{sepD}
\frac{(1-|\lambda|^2)(1-|\lambda^*|^2)}{|1-\overline{\lambda}\lambda^*|^2} \leq (1-|\lambda|^2)^{\delta_\Lambda^2},\qquad  \lambda,\lambda^*\in \Lambda.
\end{equation}

For separation in the Dirichlet space $\mathcal{D}$ we obtain the following 0-1 law.

\begin{theorem}\label{Dsepare}
$$
P(\Lambda(\omega)\text{ is  $\cD$--separated})=\left\{
\begin{array}{lllll}
1,& \text{ if  \, $\exists \gamma\in (1/2,1) $ such that } &\displaystyle \sum_n 2^{-\gamma n}N_{n}^{2} <\infty,\\
0,& \text{ if  \, $ \forall  \gamma \in (1/2,1)$ we have } &\displaystyle \sum_n 2^{-\gamma n }N_{n}^{2} =\infty.\\
\end{array}
\right.
$$
\end{theorem}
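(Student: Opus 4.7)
The plan is to reduce the $\cD$-separation condition \eqref{sepD} to an angular gap condition between the Steinhaus arguments, and then apply the two halves of the Borel--Cantelli lemma. Treating \eqref{sepD} symmetrically in $\lambda,\lambda^*$, a pair with $\lambda\in A_k$, $\lambda^*\in A_l$, $k\le l$, fails to be $\delta$-separated if and only if
\[
|1-\overline{\lambda}\lambda^*|^2 < (1-|\lambda|^2)(1-|\lambda^*|^2)^{1-\delta^2} \asymp 2^{-k-l(1-\delta^2)}.
\]
Combined with $|1-\overline{\lambda}\lambda^*|^2 \asymp 2^{-2k}+(\theta_\lambda-\theta_{\lambda^*})^2$, this amounts (when $l(1-\delta^2)<k$, else the event is impossible) to the angular gap bound $|\theta_\lambda-\theta_{\lambda^*}|\lesssim 2^{-(k+l(1-\delta^2))/2}$. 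Setting $\gamma:=1-\delta^2/2$ and $\eta:=1-\delta^2=2\gamma-1$, the single-pair non-$\delta$-separation probability is $\lesssim 2^{-\gamma k}$ within the annulus $A_k$, and $\lesssim 2^{-(k+l\eta)/2}$ for cross pairs with $l\le k/\eta$.

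For sufficiency, I would start from the hypothesis $\sum_n N_n^2\, 2^{-\gamma_0 n}<\infty$ for some $\gamma_0\in(1/2,1)$ and choose $\delta\in(0,1)$ with $\delta^2=1-\gamma_0$, so that $\eta=\gamma_0$ and $\gamma=(1+\gamma_0)/2>\gamma_0$. Bounding the expected number of non-$\delta$-separated pairs, decomposing into within-annulus and cross-annulus contributions, and applying $N_k N_l\le\tfrac12(N_k^2+N_l^2)$ together with the geometric-series summation in the remaining index, yields
\[
\EE[X]\lesssim \sum_k N_k^2\,2^{-\gamma k} + \sum_l N_l^2\,2^{-\eta l} = \sum_k N_k^2\,2^{-\gamma k} + \sum_l N_l^2\,2^{-\gamma_0 l}<\infty.
\]
Borel--Cantelli then gives only finitely many non-$\delta$-separated pairs almost surely. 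Since two distinct random points have positive $\cD$-separation almost surely, the infimum over all pairs is a.s.\ positive, and $\Lambda(\omega)$ is a.s.\ $\cD$-separated.

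For necessity, assume the sum diverges for every $\gamma\in(1/2,1)$. Fix $\delta\in(0,1)$, let $\gamma=1-\delta^2/2\in(1/2,1)$, and let $F_k$ denote the event that some pair in $A_k$ is not $\delta$-separated. The events $(F_k)$ are independent because each depends only on the angles of points of $\Lambda$ in $A_k$, and these index sets are disjoint. Writing $X_k$ for the number of non-$\delta$-separated pairs in $A_k$ with $\mu_k:=\EE[X_k]\asymp N_k^2\,2^{-\gamma k}$, a direct second-moment estimate (disjoint-index pairs are independent; shared-index pairs decouple after conditioning on the common angle) yields $\EE[X_k^2]\lesssim\mu_k+\mu_k^2$. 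Paley--Zygmund then gives $P(F_k)\gtrsim\mu_k/(1+\mu_k)$, so $\sum_k P(F_k)=\infty$ by hypothesis. The second Borel--Cantelli lemma produces almost surely infinitely many non-$\delta$-separated pairs; intersecting over a countable dense subset of $\delta\in(0,1)$ shows that $\Lambda(\omega)$ is a.s.\ not $\cD$-separated.

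The principal obstacle is the cross-annulus estimate in the sufficient direction: the naive matching $\delta^2=2(1-\gamma_0)$ (which aligns the within-annulus exponent with $\gamma_0$) produces the wrong exponent in the $N_l^2$-term after AM--GM. The remedy is to give up a factor of two and take $\delta^2=1-\gamma_0$, which makes the cross-annulus exponent $\eta=2\gamma-1$ exactly equal to $\gamma_0$ while keeping the within-annulus exponent $\gamma=(1+\gamma_0)/2$ above $\gamma_0$, so that both series converge by the hypothesis. A secondary point is the second-moment estimate for $X_k$, but the independence structure of the Steinhaus angles reduces it to an elementary computation.
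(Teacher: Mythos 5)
Your argument is correct and rests on the same overall strategy as the paper's proof --- reduce failure of \eqref{sepD} to an angular-gap condition, work annulus by annulus, and apply the two halves of the Borel--Cantelli lemma --- but both halves are executed with genuinely different tools. For the probability-zero direction the paper likewise only looks at pairs inside a single annulus $A_k$, but instead of a second-moment bound it invokes Cochran's ``uncrowded road'' lemma, which gives the exact value $1-(1-N_k2^{-\gamma k-1})^{N_k-1}$ for the probability that some pair in $A_k$ has angular gap at most $\pi 2^{-\gamma k}$, and deduces $\sum_k P(\Omega_k^{(l)})=\infty$ from the divergence of the associated infinite product; your Paley--Zygmund bound $P(F_k)\gtrsim \mu_k/(1+\mu_k)$ reaches the same conclusion and is self-contained. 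For the probability-one direction the paper attaches to each $\lambda\in A_k$ a ``Dirichlet neighborhood'' $T_\lambda^{\eta,\alpha}$ of radial extent $[(1-|\lambda|)^{\eta},(1-|\lambda|)^{1/\eta}]$ and angular width $(1-r)^{\alpha}$ with $\gamma<\alpha<1$, observes that points outside the radial range are deterministically $\cD$-separated from $\lambda$, and bounds the probability that the neighborhood captures another point, arriving at $\sum_k N_k^2 2^{-\alpha k}<\infty$; your direct first-moment count over pairs indexed by $(k,l)$ with $k\le l\le k/\eta$ encodes the same geometry but treats the cross-annulus contribution more explicitly, and your observation that one must take $\delta^2=1-\gamma_0$ (so that the $N_l^2$-term after AM--GM lands exactly on the exponent $\gamma_0$ while the $N_k^2$-term lands on $(1+\gamma_0)/2>\gamma_0$) is the precise analogue of the paper's freedom to choose the angular exponent $\alpha$ strictly between $\gamma$ and $1$. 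Two points you gloss over are harmless but worth recording: the implied constants in $|1-\overline{\lambda}\lambda^*|^2\asymp 2^{-2k}+(\theta_\lambda-\theta_{\lambda^*})^2$ force a small perturbation of $\delta$ in the necessity direction (the paper absorbs this by passing from $\delta_l$ to $\delta_l'$; your intersection over a countable set of $\delta$'s tending to $0$ achieves the same effect), and annuli with $N_k\le 1$ contribute only a convergent amount to $\sum_k N_k^2 2^{-\gamma k}$ and should be discarded before applying Paley--Zygmund, exactly as the paper reduces to $N_k\ge 2$.
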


We observe that in both conditions we can replace the sum by a supremum (this amounts to 
replacing $\gamma$ by a slightly bigger or smaller value). The lower bound $1/2$ for $\gamma$ 
is not very important, since it is the behavior close to the value 1 which counts.

As we will see, this is a very mild condition in comparison to the almost sure zero- or Carleson-measure condition, and does not play a big role for the almost sure interpolation problem.

\subsubsection{\bf Interpolation in the Dirichlet space $\cD$}

Recall that Bogdan showed that $\Lambda(\omega)$ is almost surely a zero sequence
for $\cD$ if and only if $\sum_n n^{-1}N_n<+\infty$.
This motivates already the necessary part of the following complete characterization of almost surely {\it universal} interpolating sequences for $\cD$. The sufficiency comes essentially from the fact that the condition easily implies the (mild) separation condition, as well as the Carleson measure condition which will be shown in a similar fashion as in Theorem \ref{CarlAlpha1}.

\begin{theorem}\label{sintD}
\[
 P\big({\Lambda}(\omega) \text{  is universal interpolating for $\cD$} \big)=\left\{
\begin{array}{l}
1\\
0
\end{array}
\right.
\text{ if and only if } \sum_n n^{-1}N_{n}\left\{
\begin{array}{lll}
 <\infty\\
 =\infty.\\
\end{array}
\right. 
\]
\end{theorem}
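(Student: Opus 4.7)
A universal interpolating sequence in $\cD$ is automatically a zero set: given $\lambda_0\in \Lambda$, interpolate the data $1$ at $\lambda_0$ and $0$ elsewhere, then multiply the resulting $f\in\cD$ by $(z-\lambda_0)$ to obtain a nontrivial element of $\cD$ vanishing on all of $\Lambda$. Hence if $\sum_n n^{-1}N_n=\infty$, Bogdan's 0-1 law \eqref{ZeroD} forces $\Lambda(\omega)$ to fail almost surely to be a zero set, a fortiori to fail to be interpolating.

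\textbf{Sufficiency.} Assume $\sum_n n^{-1}N_n<\infty$. Since $\cD$ enjoys the complete Pick property, the B\o e/AHMR characterization reduces universal interpolation to the conjunction of $\cD$-separation and of $\mu_\Lambda$ being a Carleson measure for $\cD$. The separation condition is essentially free: $\sum n^{-1}N_n<\infty$ forces $N_n=o(n)$, whence $2^{-\gamma n}N_n^2$ is summable for every $\gamma\in(1/2,1)$, and Theorem \ref{Dsepare} yields almost sure $\cD$-separation.

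For the Carleson condition I would exploit the reduction mentioned in the introduction: by rotational invariance of the Steinhaus arguments, Stegenga's capacitary criterion for unions of arcs collapses to a one-box condition on a single dyadic arc $I$, which for $\cD$ takes the form $\mu_\Lambda(S_I)\lesssim 1/\log(2/|I|)\asymp \Capa(I)$. For a dyadic arc $I$ with $|I|=2^{-k}$, the random variable
\[
\mu_\Lambda(S_I)=\sum_{m\geq k}\sum_{\lambda\in \Lambda\cap A_m}\frac{1}{\|k_\lambda\|_\cD^2}\mathbf{1}_{\{\lambda\in S_I\}}
\]
is a sum of independent Bernoulli-weighted terms with weights $\asymp 1/m$ (using $\|k_\lambda\|_\cD^2\asymp m$ for $\lambda\in A_m$) and success probability $2^{-k}$. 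I would then adapt the generating-function/exponential-Markov argument of Theorem \ref{CarlAlpha1}: estimate $\EE[e^{\eta k \mu_\Lambda(S_I)}]$ via the product formula for independent Bernoullis, bound it by $\exp\big(C\eta\,k\, 2^{-k}\sum_{m\geq k}N_m/m\big)$, and choose $\eta$ so that Markov's inequality produces a deviation bound of order $2^{-2k}$. Summing over the $2^k$ dyadic arcs at scale $k$ and over $k$, Borel--Cantelli yields the one-box Carleson condition almost surely, and the proof is complete.

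\textbf{Main obstacle.} The Carleson measure step is the delicate point: the logarithmic capacity leaves only a logarithmic margin, so a crude Tchebychev deviation bound \`a la Rudowicz is too weak to absorb the $2^k$-fold union bound over dyadic arcs at scale $k$. The sharper exponential-moment technique is essential, and the main technical care lies in calibrating the Markov parameter $\eta$ so that the exponential moment remains controlled (through the assumed convergence of $\sum N_m/m$) while the threshold penalty overpowers the number of dyadic arcs.
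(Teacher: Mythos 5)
Your proposal is correct and follows essentially the same route as the paper: necessity via the zero-set obstruction and Bogdan's 0-1 law, and sufficiency by combining the (easily satisfied) separation criterion of Theorem \ref{Dsepare} with the one-box reduction of Stegenga's condition and the generating-function/Markov deviation bound of order $2^{-2n}$ followed by Borel--Cantelli, exactly as in Section \ref{SectionIntD}. Your closing remark about calibrating the exponential-moment parameter matches the paper's choice $s=2^{n/2}$, $A=4$.
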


We can reformulate the above result in the same spirit as Corollary \ref{zoub}
\begin{coro}
A sequence is almost surely interpolating for $ \cD $ if and only if it is almost surely a zero sequence
for $\cD$.
\end{coro}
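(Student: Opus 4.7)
The plan is to split into necessity (via zero sequences) and sufficiency (via separation plus Carleson measure), exploiting that universal interpolation for $\cD$ is equivalent, by the Pick-property characterization of Marshall--Sundberg/Bishop/B\o e cited in the introduction, to the pair of conditions that $\Lambda$ is $\cD$-separated and $\mu_\Lambda=\sum_\lambda\|k_\lambda\|_1^{-2}\delta_\lambda$ is a Carleson measure for $\cD$.

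For necessity, observe that any universal interpolating sequence is a zero sequence for $\cD$: given $\lambda_0\in\Lambda$, interpolating the data $a_{\lambda_0}=\|k_{\lambda_0}\|_1$, $a_\lambda=0$ for $\lambda\neq\lambda_0$ (which lies in the admissible weighted $\ell^2$) yields $f\in\cD$ with $f(\lambda_0)\neq 0$ and $f|_{\Lambda\setminus\{\lambda_0\}}=0$; the polynomial multiple $(z-\lambda_0)f\in\cD$ is then a non-trivial element vanishing on all of $\Lambda$. Hence if $\Lambda(\omega)$ is almost surely interpolating then it is almost surely a zero sequence, and Bogdan's dichotomy \eqref{ZeroD} forces $\sum_n n^{-1}N_n<\infty$. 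Conversely, if this sum diverges, \eqref{ZeroD} says $\Lambda(\omega)$ is almost surely not a zero sequence, hence not interpolating. This delivers both the necessary direction and the full 0-1 law.

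For sufficiency, assume $\sum_n n^{-1}N_n<\infty$. The terms of a convergent non-negative series tend to zero, so $N_n/n\to 0$; in particular $N_n=O(n)$ and $\sum_n 2^{-\gamma n}N_n^2<\infty$ for every $\gamma>0$, in particular some $\gamma\in(1/2,1)$. Theorem \ref{Dsepare} then yields almost sure $\cD$-separation. For the Carleson condition, Stegenga's theorem requires $\mu_\Lambda(S_E)\lesssim \Capa(E)$ for every finite union of arcs $E\subset\TT$; the pivotal simplification announced in the introduction is that under the random Steinhaus rotations this reduces to the one-box form
\begin{equation*}
\mu_\Lambda(S_I)\;\lesssim\;\frac{1}{\log(1/|I|)}\qquad\text{for every arc }I\subset\TT,
\end{equation*}
whose right-hand side is comparable to $\Capa(I)$ for a single arc. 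Using $\|k_\lambda\|_1^{-2}\asymp 1/n$ on the dyadic annulus $A_n$, for a dyadic arc $I$ of length $2^{-m}$ one has
\begin{equation*}
\mu_\Lambda(S_I)\;\asymp\;\sum_{n\ge m}\frac{X_n^{I}}{n},\qquad X_n^{I}\sim\mathrm{Bin}\bigl(N_n,|I|/2\pi\bigr),
\end{equation*}
with the $X_n^I$ independent in $n$ thanks to the Steinhaus hypothesis, and $\EE[\mu_\Lambda(S_I)]\lesssim 2^{-m}\sum_{n\ge m}N_n/n\lesssim 2^{-m}$, which is much smaller than the target $1/m$.

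To upgrade this to an almost sure statement, I will follow the generating-function/Markov scheme used for Theorems \ref{Hardy1} and \ref{CarlAlpha1}: estimate the exponential moment of $\sum_{n\ge m}X_n^I/n$ via a product over $n$, apply Markov's inequality with a judicious parameter choice to obtain a tail bound $P(\mu_\Lambda(S_I)>K/m)\le\varphi(m)$ with $\varphi(m)$ summable across the $2^m$ dyadic arcs of length $2^{-m}$ and across $m$, then invoke Borel--Cantelli and a dyadic approximation of general arcs. The main obstacle is calibrating this exponential-moment estimate: because the target scale $1/m$ is only logarithmically small while there are $2^m$ dyadic arcs at level $m$, one must extract enough decay from the $1/n$ weight and from the hypothesis $\sum_n n^{-1}N_n<\infty$ to beat the exponential union bound --- a noticeably more delicate balance than in Theorem \ref{CarlAlpha1}, where the target scale is polynomial in $2^{-m}$ and the weight $(1-|\lambda|)^{1-\alpha}$ provides a genuine power saving.
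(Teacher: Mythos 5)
Your overall architecture is the paper's: necessity via the zero-sequence obstruction and Bogdan's dichotomy \eqref{ZeroD}, sufficiency via $\cD$-separation (Theorem \ref{Dsepare}) plus the one-box reduction of Stegenga's condition, with the 0--1 law handling the degenerate cases. The necessity argument and the separation step ($N_n=O(n)$ forces $\sum_n 2^{-\gamma n}N_n^2<\infty$ for every $\gamma>0$) are complete and correct. Note, however, that the corollary needs no new work at all: it is the conjunction of Theorem \ref{sintD} (almost sure interpolation iff $\sum_n n^{-1}N_n<\infty$) with \eqref{ZeroD} (almost sure zero sequence iff the same sum converges), and that is all the paper does here. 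What you have actually undertaken is a re-proof of Theorem \ref{sintD} itself.

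And in that re-proof there is a genuine gap: the tail estimate for $Y_{n,k}=\sum_{m\ge n}(n/m)X_{n,m,k}$ is announced as a plan and then flagged as ``the main obstacle,'' but never carried out. This is the analytic core of the sufficiency direction, so leaving it uncalibrated leaves the proof incomplete. For the record, the calibration does work and is not more delicate than in Theorem \ref{CarlAlpha1}: writing $G_{Y_{n,k}}(s)=\prod_{m\ge n}(1-2^{-n}+2^{-n}s^{n/m})^{N_m}$ and using $\log(1+u)\le u$ together with the elementary bound $x(a^{1/x}-1)\le a$ for $x=m/n\ge1$, one gets
\[
\log P(Y_{n,k}\ge A)\;\le\;n\,2^{-n}s\sum_{m\ge n}\frac{N_m}{m}-A\log s .
\]
Choosing $s=2^{n/2}$ and $A=4$ makes the first term $n\,2^{-n/2}c_n\to 0$ (where $c_n=\sum_{m\ge n}N_m/m\to0$ by hypothesis) while $A\log s=2n\log 2$, so $P(Y_{n,k}\ge 4)\lesssim 2^{-2n}$, which beats the $2^n$ dyadic arcs at level $n$ and lets Borel--Cantelli close the argument. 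The point you missed is that Markov's inequality with $s=2^{n/2}$ already yields a tail bound that is \emph{exponentially} small in $n$ even though the target threshold $4/n$ is only logarithmically small; the $1/m$ weights and the convergence of $c_n$ are needed only to keep the exponential-moment term bounded, not to produce the decay.
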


\subsection{Organization of the paper}
This paper is organized as follows. In the next section we present the improved version of the
Rudowicz result concerning random Carleson measures in Hardy spaces which is the guideline for the corresponding result in the Dirichlet space. Indeed, this largely clarifies and simplifies not only the situation in the Hardy space, but also indicates the direction of investigation for the Dirichlet space. This will allow us to start Section 3 with the discussion of the Carleson measure result in $\cD_{\alpha}$, $0<\alpha<1$. We then solve completely the interpolation situation in these spaces.
The next section is devoted to the
0-1 law on separation in the classical Dirichlet space. This requires a subtle
adaption of the Cochran discussion in the Hardy space to the much more intricate
geometry in the Dirichlet space. 
The proofs of the results
on interpolating sequences in the classical Dirichlet space are contained in Section 5.
 Actually, as in the Hardy space,
the core of the proof being probabilistic, we are able to get rid of analytic functions.
In the final Section 6, we give some indications to the 0-1 law on zero-sequences in 
weighted Dirichlet spaces based on Bogdan's proof in the classical Dirichlet space.
\\

A word on notation. Suppose
$A$ and $B$ are strictly positive expressions.
We will write $A\lesssim B$ meaning that $A\le cB$ for some positive constant $c$ not depending on the parameters behind $A$ and $B$. By $A\simeq B$ we mean $A\lesssim B$ and $B\lesssim A$. We further use the notation $A\sim B$ provided the quotient $A/B\to 1$ when passing to the suitable limit.



\section{Carleson condition in the Hardy space}\label{Hardy}

Before considering Carleson measure conditions in the Dirichlet space, we will
discuss the situation in the Hardy space, in particular we will prove here
Theorem \ref{Hardy1}.
We will also construct an example showing that it is not possible to choose $\beta=1$
in the statement of Theorem \ref{Hardy1}.

\subsection{Proof of Theorem \ref{Hardy1}} 

We start introducing some notation. Let 
$$I_{n,k}=\{{\rm e}^{2\pi it}:t\in [k2^{-n},(k+1)2^{-n})\}\quad n\in\NN, \quad k=0,1,\ldots, 2^n-1$$
be dyadic intervals and 
$S_{n,k}=S_{I_{n,k}}$ the associated Carleson window. In order to check the Carleson
measure condition for a positive Borel measure $\mu$ on $\DD$ it is clearly
sufficient to check the Carleson measure condition for windows $S_{n,k}$:
\[
 \mu(S_{n,k})\le C|I_{n,k}|= C2^{-n},
\]
for some fixed $C>0$ and every $n\in\NN$, $k=0,\ldots,2^n-1$.
Given $n$, $k$ and $m\ge k$ let $X_{n,m,k}$ be the number of points of $\Lambda$
contained in $S_{n,k}\cap A_m$ (we stratify the Carleson window $S_{n,k}$ into
a disjoint union of layers $S_{n,k}\cap A_m$). Since $A_m$ contains $N_m$ points and
the (normalized) length of $I_{n,k}$ is $2^{-n}$, we have $X_{n,m,k}\sim 
B(2^{-n},N_m)$ (binomial law). In order to show that $d\mu_{\Lambda}$ is almost surely a Carleson
measure we thus have to prove the existence of $C$ such that
\[ 
 \mu_{\Lambda}(S_{n,k})=\sum_{m\ge n}2^{-m}X_{n,m,k}\le C 2^{-n}
\] 
almost surely, in other words we have to prove
\[
 \sup_{n,k}2^n\sum_{m\ge n}2^{-m}X_{n,m,k}\le C
\]
almost surely (in $\omega$). The estimate above had already been investigated by
Rudowicz \cite{R}. Here we will proceed in a different way with respect to Rudowicz' argument to obtain
an improved version of his result and which allows to better understand the Dirichlet space
situation.

\begin{proof}[Proof of Theorem \ref{Hardy1}]
In view of our preliminary remarks, we need to look at the random variable
\[
 Y_{n,k}=2^n\sum_{m=n}^{+\infty}2^{-m}X_{n,m,k},
\]
where, as said above, $X_{n,m,k}\sim B(2^{-n},N_m)$.
Hence, saying that $Y_{n,k}\ge A$ means that there are Carleson windows for which the
Carleson measure constant is at least $A$. 
 Also denote by $ G_{Y_{n,k}} $ the probability generating function of the random variable $ Y_{n,k} $, i.e. $ G_{Y_{n,k}}(s)=\mathbb{E}(s^{Y_{n,k}}) $. It is well known that for a random variable $ X $ which follows a binomial distribution with parameters $ p,N $ we have that $ G_X(s)=(1-p+ps)^N $.
	
	By the hypothesis, for $ n $ sufficiently large, $ N_n\leq 2^{(1-\ve)n}, \ve = 1-1/\beta $. Introduce now two parameters $ A,s>0 $ to be specified later. By Markov's inequality we have that
	\begin{align*}
	\log P(Y_{n,k} \geq A ) & =  \log P (s^{Y_{n,k}} \geq s^A) \\ 
	& \leq \log \big(\frac{1}{s^A}  G_{Y_{n,k}}(s) \big ) \\
	& = \sum_{m \geq n}N_m\log(1-2^{-n}+2^{-n}s^{2^{n-m}})-A\log(s) \\
	& \leq 2^{-n} \sum_{m \geq n}N_m(s^{2^{n-m}}-1)-A\log(s) \\
	& = \sum_{m\geq 0} N_{n+m}2^{-(n+m)}2^{m}(s^{2^{-m}}-1)-A\log(s). \\
	\end{align*}
	At this point notice that $ x(a^{1/x}-1)\leq a , $ for all $ x\geq 1, a>0 $, which together with the hypothesis on $ N_n $ gives 
	\[
	\log P(Y_{n,k} \geq A ) \leq \sum_{m\geq 0}2^{-\ve(n+m)}s-A\log(s)= \frac{2^\ve}{2^\ve-1}s2^{-\ve n}- A \log(s).
	\]
	Now set $ s=2^{\frac{\ve n}{2}} , A= \frac{4}{\ve }$ in the last inequality to get
	\[
	\log P(Y_{n,k} \geq A ) \leq \frac{2^\ve}{2^\ve-1}2^{-\frac{\ve n}{2}}-2n\log(2). 
	\]
	Hence, $ P(Y_{n,k} \geq \frac{4}{\ve} ) \leq C(\ve) 2^{-2n} $.

In view of an application of the Borel-Cantelli Lemma we compute
\[
\sum_{n\ge 0}\sum_{k=1}^{2^n}P(Y_{n,k}\ge A)
 \le C(\ve)\sum_{n\ge 0}2^n\times 2^{-2n}<\infty.
\] 

Hence, by the Borel-Cantelli Lemma, the event $Y_{n,k}\ge A$ can happen for at most
a finite number of indices $(n,k)$ almost surely. In particular the Carleson measure constant of 
$d\mu_{\Lambda}$ is almost surely at most $A$ except for a finite number of 
Carleson windows.
\end{proof}

\subsection{An example}

Here we construct an example showing that we cannot choose $\beta=1$ in the theorem.
For $\gamma>1$, let 
\[
 N_n=\frac{2^n}{n^{\gamma}},\quad n\ge 1.
\]
Clearly $\sum_{n\ge 1}2^{-n}N_n<+\infty$. For $n\in \NN^*$ and $k=0,1,\ldots, 2^n -1$
define the event
\[
 A^N_{n,k}=\bigcup_{m=n}^{n+N}(X_{n,k,m}\ge 2^{m-n}).
\]
Since $X_{n,k,m}$ are mutually independent for fixed $n,k$, we have
\[
 P(A^N_{n,k})=\prod_{m=n}^{n+N}P(X_{n,k,m}\ge 2^{m-n}).
\]
The following well known elementary lemma 
will be very useful 
(it is essentially approximation of the binomial law by the Poisson law). We refer for instance to
\cite{Bil} for the material on probability theory -- essentially elementary -- used in this paper.

\begin{lem}\label{bino} If $X$ is a binomial random variable with parameters $p,N,$ then for every $s=0,1,2,\ldots$,
	$$\lim_{N\to \infty\atop pN\to 0}\frac{P(X=s)}{(pN)^s}=\lim_{N\to \infty\atop pN\to 0}\frac{P(X\geq s)}{(pN)^s}=\frac{1}{s!}.
	$$
\end{lem}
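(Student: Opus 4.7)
The plan is to reduce the statement to the elementary asymptotics of the binomial coefficient and of $(1-p)^N$, exploiting the joint hypothesis $N\to\infty$, $pN\to 0$ (which forces $p\to 0$).

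First, I would write
\[
P(X=s)=\binom{N}{s}p^{s}(1-p)^{N-s}
=\frac{\binom{N}{s}}{N^{s}}\,(pN)^{s}\,(1-p)^{N-s}
\]
and divide by $(pN)^{s}$. The factor $\binom{N}{s}/N^{s}=\frac{N(N-1)\cdots(N-s+1)}{s!\,N^{s}}$ tends to $1/s!$ as $N\to\infty$, with $s$ fixed. For the remaining factor $(1-p)^{N-s}$, note that $pN\to 0$ with $N\to\infty$ forces $p\to 0$; then
\[
(N-s)\log(1-p)=-(N-s)\bigl(p+O(p^{2})\bigr)\longrightarrow 0,
\]
so $(1-p)^{N-s}\to 1$. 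Combining the two limits yields $P(X=s)/(pN)^{s}\to 1/s!$, which handles the first equality.

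For the second equality, I would show that the tail $P(X\ge s+1)$ is negligible compared with $(pN)^{s}$. Using the crude bound $\binom{N}{k}\le N^{k}/k!$ valid for all $k\ge 0$, one obtains
\[
P(X\ge s+1)\le \sum_{k\ge s+1}\frac{(Np)^{k}}{k!}
\le (Np)^{s+1}\sum_{j\ge 0}\frac{(Np)^{j}}{j!}
=(Np)^{s+1}e^{Np}.
\]
Dividing by $(pN)^{s}$ gives $P(X\ge s+1)/(pN)^{s}\le (pN)\,e^{pN}\to 0$ under the assumptions. Writing $P(X\ge s)=P(X=s)+P(X\ge s+1)$ and applying the first limit finishes the argument.

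The routine part is the binomial coefficient asymptotics; the only mildly delicate step is controlling the tail uniformly rather than term by term, but the geometric-series majorant above bypasses any need for dominated convergence. No deeper tool is required, which is why the lemma is essentially the classical Poisson approximation to the binomial.
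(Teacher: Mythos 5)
Your proof is correct. Note, however, that the paper does not actually prove this lemma: it labels it ``well known'' (the Poisson approximation to the binomial law) and refers the reader to Billingsley, so there is no in-text argument to compare against. Your self-contained computation is exactly the standard one underlying that citation: the factorization $P(X=s)/(pN)^s=\bigl(\binom{N}{s}/N^s\bigr)(1-p)^{N-s}$ with each factor tending to $1/s!$ and $1$ respectively, plus the geometric/exponential majorant $P(X\ge s+1)\le (Np)^{s+1}e^{Np}$ to pass from $P(X=s)$ to $P(X\ge s)$. The only point worth making explicit is that the joint regime $N\to\infty$, $pN\to 0$ forces $p\to 0$ and $(N-s)p^2\le (Np)\,p\to 0$, which you do note; with that, the argument is complete, including the edge case $s=0$. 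Supplying the proof rather than outsourcing it is a harmless strengthening of the paper's presentation.
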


Note that in view of the Lemma we can replace in $A_{n,k}^N$ the condition 
$X_{n,k,m}\ge 2^{m-n}$ by $X_{n,k,m}=2^{n-m}$.
Since in our situation $N=2^m/m^{\gamma}\to \infty$ and $pN=N_m/2^n
\le 2^{m-n}/m^{\gamma}\to 0$ ($n\le m\le n+N$, $N$ fixed), we get
\begin{eqnarray}\label{PAnk}
 P(A^N_{n,k})&=&\prod_{m=n}^{n+N}P(X_{n,k,m}= 2^{m-n})
 \simeq \prod_{m=n}^{n+N} \left(\frac{1}{(2^{m-n})!} 
 \left(\frac{N_m}{2^n}\right)^{2^{m-n}}\right)\nonumber\\
 &=&\prod_{m=n}^{n+N} \left(\frac{1}{(2^{m-n})!} 
 \left(\frac{2^{m-n}}{m^{\gamma}}\right)^{2^{m-n}}\right)\nonumber\\
 &=&\prod_{m=0}^{N} \left(\frac{1}{(2^{m})!} 
 \left(\frac{2^{m}}{(m+n)^{\gamma}}\right)^{2^{m}}\right)\nonumber\\
&\simeq& \frac{1}{n^{2^{N+1}\gamma}}.
\end{eqnarray}
The crucial observation here is that for every fixed $N$ this probability goes polynomially
to zero. In particular 
\[
 \sum_{n\ge 0}\sum_{k=0}^{2^n-1}P(A^N_{n,k})\sim\sum_{n\ge 0}\frac{2^n}
 {n^{2^{N+1}\gamma}}=+\infty.
\]
We need to apply a reverse version of the Borel-Cantelli lemma. This works for 
events which are independent. However this is not the case for $A^N_{n,k}$ and
$A^N_{i,j}$ when the corresponding dyadic annuli meet. Since $A^N_{n,k}$ intersects the annuli
$A_l$, $n\le l\le n+N$ and $A^N_{i,j}$ intersects $A_l$, $i\le l\le i+N$, the events $A^N_{n,k}$
and $A^N_{i,j}$ are dependent when $[ n,n+N]\cap
[ i,i+N]\neq \emptyset$, i.e.\ when $|n-i|\le N+1$. 
We will appeal to a more general version 
of the Borel-Cantelli Lemma, see Lemma \ref{BoCanV2}. This requires that
\[
\liminf_{M\to+\infty}\frac{\sum_{i,j,k,n\le M}P(A_{i,j}^N\cap A_{n,k}^N)}
 {\Big(\sum_{k,n\le M}P(A^N_{n,k})\Big)^2}\le 1.
\] 
The difference here is of course made by the  elements with $|n-i|\le N+1$ since for 
$|n-i|>N+1$, we have $ P(A_{i,j}^N\cap A_{n,k}^N)=P(A_{i,j}^N)\times P(A_{n,k}^N)$.
In particular
\begin{eqnarray*}
\lefteqn{\sum_{i,j,k,n\le M}P(A_{i,j}^N\cap A_{n,k}^N)}\\
&&=\Big(\sum_{k,n\le M}P(A^N_{n,k})\Big)^2
 +\sum_{\stackrel{|n-i|\le N+1}{i,j,k,n\le M}}P(A_{i,j}^N\cap A_{n,k}^N)
 -\sum_{\stackrel{|n-i|\le N+1}{i,j,k,n\le M}}P(A_{i,j}^N)\times P( A_{n,k}^N). 
\end{eqnarray*}
For fixed $N$ we can assume $i$ and $n$ big so that $N\ll N_n, N_i$.
Note that $P(A_{i,j}^N\cap A_{n,k}^N)=P(A_{i,j}^N)\times P(A_{n,k}^N|{A_{i,j}^N})$, and
we have to estimate $P(A_{n,k}^N|{A_{i,j}^N})$. The idea is to observe that the conditional
probability essentially behaves like the unconditional one, i.e.\ the knowledge of
$A_{i,j}^N$ does not interfer too much on the probability of $A_{n,k}^N$.

More precisely, if $|n-i|\le N+1$, the fact that $A_{i,j}^N$ has occured reduces the number of
points available for $A_{n,k}^N$ (in the annulus $A_m$) 
by at most $2^N$ which we can again assume
neglectible with respect to $N_m$, $n\le m\le n+N$. 

One has to pay a little bit attention here. 
A priori, it could happen that $S_{i,j}\subset S_{n,k}$ or $S_{n,k}\subset S_{i,j}$ (in these cases
the knowledge of $A_{i,j}^N$ has some indicidence to that of $A_{n,k}^N$ or vice versa), but this
requires $|j-k|\le 2^N$ which is uniformly bounded. The corresponding sum of 
probabilities is thus also uniformly bounded and dividing by the square of eventually
divergent partial sums
makes these terms neglectible. So we can assume that this pathological situation does not 
occur.

Then, in order to estimate the probability of 
$A_{n,k}^N$, under the condition $A_{i,j}^N$, we can run a similar computation as in 
\eqref{PAnk} with $N_m$ replaced by $N_m-N=N_m(1-N/N_m)$ (for those $m$ for which the 
annulus $A_m$ lies in both Carleson boxes;
for the others we keep $N_m$), which yields a 
comparable probability:
\[
 P(A_{i,j}^N\cap A_{n,k}^N)=(1+\varepsilon_{n,k,i,j}) P(A_{i,j}^N)\times P( A_{n,k}^N)
\]
where $\varepsilon_{n,k,i,j}\to 0$ when $i,j,n,k$ get big.
Then
\begin{eqnarray*}
\lefteqn{\sum_{\stackrel{|n-i|\le N+1}{i,j,k,n\le M}}P(A_{i,j}^N\cap A_{n,k}^N)
 -\sum_{\stackrel{|n-i|\le N+1}{i,j,k,n\le M}}P(A_{i,j}^N)\times P( A_{n,k}^N)}\\
&&=\sum_{n,k\le M}P(A_{n,k}^M)(1-P(A_{n,k}^M)
+
\sum_{\stackrel{0<|n-i|\le N+1}{i,j,k,n\le M}}\varepsilon_{n,k,i,j}P(A_{i,j}^N)\times P( A_{n,k}^N),
\end{eqnarray*}
which, when dividing through $\Big(\sum_{k,n\le M}P(A^N_{n,k})\Big)^2\to
+\infty$, $M\to\infty$, goes to 0.

From Lemma \ref{BoCanV2} we conclude that for every $N$ there exists infinitely many
$n,k$ such that $\mu_{\Lambda}(S_{n,k})\ge N+1$, which concludes the example.


\section{Interpolation in $\cD_{\alpha}$, $0<\alpha <1$.}
\label{PfThm5}

We start this section with the proof
of Theorem
\ref{CarlAlpha1} which we recall here for convenience.
\begin{theorem*}
Let $0<\alpha<1$. 
If 
\begin{equation}
 \displaystyle\sum_n 2^{- (1-\alpha)n}N_{n}<\infty
\end{equation}
 then  $\mu_{\Lambda}$ is almost surely a Carleson measure for 
$\cD_\alpha$.
\end{theorem*}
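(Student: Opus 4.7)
The plan is to adapt the generating-function / Markov argument from the proof of Theorem \ref{Hardy1} to the weighted setting, taking advantage of the fact that the extra weight $(1-|\lambda|^2)^{1-\alpha}$ built into $\mu_\Lambda$ supplies enough decay that no spare exponent $\beta>1$ is needed here.

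\textbf{Step 1 (reduction to a single box).} Since the law of $\Lambda(\omega)$ is rotation-invariant, Stegenga's capacitary characterization of $\cD_\alpha$-Carleson measures reduces, as emphasized in the introduction, to a one-arc condition. For $0<\alpha<1$ the $\alpha$-capacity of a single arc $I\subset\TT$ is comparable to $|I|^{1-\alpha}$, so it is enough to show that almost surely
\[
\mu_\Lambda(S_I)\leq C|I|^{1-\alpha}\qquad\text{for every arc }I\subset\TT.
\]
The usual dyadic splitting reformulates this as the almost sure uniform bound
\[
Y_{n,k}:=2^{(1-\alpha)n}\sum_{m\geq n}2^{-(1-\alpha)m}X_{n,m,k}\leq A
\]
over all $n\geq 0$ and $k=0,\dots,2^n-1$, where $X_{n,m,k}\sim B(2^{-n},N_m)$ counts the points of $\Lambda$ in $A_m\cap S_{n,k}$ and we used $(1-|\lambda|^2)^{1-\alpha}\asymp 2^{-(1-\alpha)m}$ on $A_m$.

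\textbf{Step 2 (generating function estimate).} By independence of the rotations, for $s\geq 1$,
\[
G_{Y_{n,k}}(s)=\prod_{m\geq n}\bigl(1-2^{-n}+2^{-n}s^{a_m}\bigr)^{N_m},\qquad a_m:=2^{(1-\alpha)(n-m)}\in(0,1].
\]
Combining $\log(1+x)\leq x$ with the convexity bound $s^{a_m}-1\leq a_m(s-1)$ (valid because $x\mapsto s^x$ is convex and $a_m\in[0,1]$) gives
\[
\log G_{Y_{n,k}}(s)\leq (s-1)\,2^{-\alpha n}\sum_{m\geq n}2^{-(1-\alpha)m}N_m\leq C_0\,(s-1)\,2^{-\alpha n},
\]
where $C_0:=\sum_m 2^{-(1-\alpha)m}N_m<\infty$ by hypothesis. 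Markov's inequality then yields
\[
\log P(Y_{n,k}\geq A)\leq C_0\,(s-1)\,2^{-\alpha n}-A\log s.
\]

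\textbf{Step 3 (Borel--Cantelli).} Choosing $s=2^{\alpha n/2}$ and $A=6/\alpha$, the right-hand side is $\leq -3n\log 2$ for $n$ large, so $P(Y_{n,k}\geq A)\lesssim 2^{-3n}$. Summing over the $2^n$ windows at scale $n$, then over $n$, gives a convergent series, and the Borel--Cantelli lemma forces $Y_{n,k}\leq A$ for all but finitely many $(n,k)$ almost surely, which is the required Carleson condition.

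The main obstacle is the appearance of the non-integer exponents $a_m$ inside $G_{Y_{n,k}}(s)$: this rules out the slick estimate $x(a^{1/x}-1)\leq a$ used in the Hardy argument, and forces the softer bound $s^{a_m}-1\leq a_m(s-1)$. This is precisely where the weight $2^{-(1-\alpha)m}$ pays off, because after its use the remaining series is exactly what the hypothesis $\sum 2^{-(1-\alpha)n}N_n<\infty$ controls, so no extra exponent $\beta>1$ is required, in sharp contrast with Theorem \ref{Hardy1}.
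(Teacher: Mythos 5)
Your proof is correct and follows essentially the same route as the paper: reduction to a one-box condition via Stegenga's theorem and rotation invariance, the capacity estimate $\Capa_\alpha(I)\simeq |I|^{1-\alpha}$, the same dyadic random variables $Y_{n,k}$, and a generating-function/Markov bound followed by Borel--Cantelli. The only cosmetic difference is that you use the convexity bound $s^{a}-1\le a(s-1)$ where the paper applies $x(a^{1/x}-1)\le a$ with $x=2^{(1-\alpha)(m-n)}$ (an inequality valid for all real $x\ge 1$, so, contrary to your closing remark, it does apply despite the non-integer exponents); the two estimates are equivalent up to constants, as are your slightly different choices of $s$ and $A$.
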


Recall also that
\[
  \mu_{\Lambda}=\sum_{\lambda\in\Lambda}\frac{1}{\|k_{\lambda}\|_{\alpha}^2}
 \delta_{\lambda},
\]
where
\[
 \|k_{\lambda}\|_{\alpha}^2=\left\{
 \begin{array}{ll}
 \log\displaystyle\frac{1}{1-|\lambda|^2},& \alpha=1,\\
 \displaystyle\frac{1}{(1-|\lambda|^2)^{1-\alpha}},& 0\le \alpha<1.
 \end{array}
 \right.
\] 
The case $\alpha=1$ will be useful later in the study of the classical Dirichlet space $\cD$.
\\

It shall be observed that the proof presented below does not work for the Hardy space case $\alpha=0$, for
which we have seen that it is possible to construct sequences $(r_n)$ satisfying the Blaschke 
condition, but the associated sequences $\Lambda(\omega)$ are not almost surely interpolating. 
\\

We will need Stegenga's characterization of Carleson measures for Dirichlet spaces which
involves capacities. In connection with this result we recall the the following three facts.
Once these facts collected, the proof is essentially the same as in the Hardy space.

By $\Capa_{\alpha}$ we will mean logarithmic capacity when $\alpha=1$ and just 
$\alpha$-capacity when $\alpha\in (0,1)$ (we refer the reader to e.g.\ \cite[p.19]{S}, 
\cite{EKMR} for more information on capacities). The first fact we would like to recall are the 
following known estimates (see \cite[p.19]{S}) 
\begin{eqnarray}\label{CapEstim}
\Capa_{\alpha}(I)\simeq \left\{
 \begin{array}{ll}
  |I|^{1-\alpha},& 0<\alpha<1,\\
 \left(\log\displaystyle\frac{e}{|I|}\right)^{-1},& \alpha=1,
 \end{array}
\right.
\end{eqnarray}
where $I$ is an interval.

The second fact is Stegenga's result (see e.g.\ \cite[p.19]{S}).

\begin{theorem}\label{StegThm}
A nonnegative Borel measure $\mu$ on $\DD$ is a Carleson measure for $\mathcal{D}_{\alpha}$, $0<\alpha\le 1$, if and only if there exists a constant $c>0$ such that
\begin{eqnarray}\label{CharSteg}
 \sum_{j=1}^n\mu(S_{I_j})\le c\Capa_{\alpha}\left(\bigcup_{j=1}^nI_j\right),
\end{eqnarray}
for each finite collection of disjoint subarcs $I_1, I_2,\ldots, I_n$ of the unit circle,
and arbitrary $n$.
\end{theorem}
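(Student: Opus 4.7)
For the necessity direction, fix disjoint subarcs $I_1,\ldots,I_n\subset \TT$ with union $E=\bigcup_j I_j$. My plan is to construct a single $f\in \cD_\alpha$ that is uniformly bounded below on each tent $S_{I_j}$ and whose $\cD_\alpha$-norm is controlled by $\Capa_\alpha(E)$; applying the Carleson inequality to this $f$ will then give
\[
 \sum_{j=1}^n \mu(S_{I_j}) \leq \mu\Big(\bigcup_j S_{I_j}\Big)\lesssim \int_\DD |f|^2\,d\mu \lesssim \|f\|_\alpha^2 \lesssim \Capa_\alpha(E).
\]
The natural candidate comes from the $\alpha$-equilibrium potential of $E$ (logarithmic potential when $\alpha=1$, Riesz potential of order $1-\alpha$ when $0<\alpha<1$): by definition it equals $1$ quasi-everywhere on $E$, has energy comparable to $\Capa_\alpha(E)$, and its Poisson/analytic extension into $\DD$ stays $\gtrsim 1$ throughout each Carleson window $S_{I_j}$ by the standard arc-versus-tent comparison for Poisson integrals. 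Verifying that the extension actually lives in $\cD_\alpha$ with norm $\lesssim \Capa_\alpha(E)^{1/2}$ is a routine computation that uses \eqref{CapEstim}.

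Conversely, given $f\in\cD_\alpha$, I need to show $\int_\DD |f|^2\,d\mu\lesssim \|f\|_\alpha^2$. The geometric input is the elementary observation that if $|f(z)|>t$ at some $z\in\DD$, then the non-tangential maximal function $Nf$ exceeds $t$ on the whole arc $I_z\subset \TT$ lying beneath $z$; therefore the super-level set $\{|f|>t\}\subset\DD$ is contained in the union of Carleson tents over a Whitney decomposition $\mathcal{F}_t$ of the open subset $\{Nf>t\}$ of $\TT$. Applying the hypothesized capacitary condition to each finite subfamily of $\mathcal{F}_t$ and passing to a countable limit (using countable sub-additivity of $\Capa_\alpha$ together with \eqref{CapEstim}) yields the weak-type bound $\mu(\{|f|>t\})\leq c\,\Capa_\alpha(\{Nf>t\})$. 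A layer-cake integration then gives
\[
 \int_\DD |f|^2\,d\mu \leq 2c\int_0^\infty t\,\Capa_\alpha(\{Nf>t\})\,dt.
\]

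The remaining step, where the real difficulty lies, is the capacitary strong-type inequality
\[
 \int_0^\infty t\,\Capa_\alpha(\{Nf>t\})\,dt \;\lesssim\; \|f\|_\alpha^2.
\]
This is a Maz'ya-type estimate in which the trivial weak-type bound $\Capa_\alpha(\{Nf>t\})\lesssim t^{-2}\|f\|_\alpha^2$, immediate from the extremal definition of $\Capa_\alpha$, must be boosted to a strong-type inequality. My plan is to decompose $f$ dyadically according to the level sets of $Nf$, estimate each piece by its individual $\cD_\alpha$-energy, and sum using the strong sub-additivity of $\Capa_\alpha$; equivalently, one may replace $Nf$ by $|f|$ at the outset by invoking the $\cD_\alpha$-boundedness of the non-tangential maximal function (the Dirichlet analogue of Dahlberg's theorem). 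This boosting step is exactly what separates Stegenga's theorem from a purely soft weak-type interpolation argument, and is where the capacitary machinery is genuinely used; combining it with the preceding display closes the sufficiency direction and completes the proof.
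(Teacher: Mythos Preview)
The paper does not prove Theorem~\ref{StegThm} at all: it is stated as a known result of Stegenga, with a reference to \cite[p.19]{S} (and implicitly to \cite{St}), and is then used as a black box in the proof of Theorem~\ref{CarlAlpha1}. So there is no ``paper's own proof'' to compare your proposal against.

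That said, your outline is a reasonable sketch of the standard proof. The necessity direction via the equilibrium potential of $E$ is exactly Stegenga's test-function argument, and the sufficiency direction via the layer-cake formula together with a capacitary strong-type inequality for the non-tangential maximal function is the route taken in the literature. You correctly identify the strong-type inequality
\[
\int_0^\infty t\,\Capa_\alpha(\{Nf>t\})\,dt \lesssim \|f\|_\alpha^2
\]
as the heart of the matter; this is indeed the non-trivial ingredient (due in this form to Hansson and Maz'ya, and adapted to the Dirichlet setting by Stegenga). Your description of how to prove it (``decompose $f$ dyadically according to the level sets of $Nf$ \ldots\ sum using the strong sub-additivity of $\Capa_\alpha$'') is vague enough that it would not pass as a proof on its own, but it points in the right direction. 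If you were asked to supply a full proof rather than a proposal, you would need to either reproduce the Hansson--Maz'ya argument in detail or cite it explicitly.
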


The last fact is the following observation. There exists a universal constant $c$ such that
for every finite collection $I_1$, $I_2$, ..., $I_n$ of subsarcs of $\TT$, and $I$ an arc of 
$\TT$, with $|I|=\sum_{j=1}^n|I_j|$, we have
\begin{eqnarray}\label{Spread}
 \Capa_{\alpha}(I)\le c \Capa_{\alpha}\left(\bigcup_{j=1}^nI_j\right).
\end{eqnarray}
Concerning \eqref{Spread},
we refer to \cite[Theorem 2.4.5]{EKMR} for the case $\alpha=1$. The general case 
$\alpha\in (0,1)$ is shown exactly in the same way as for $\alpha=1$.

\begin{proof}[Proof of Theorem \ref{CarlAlpha1}]
We can now essentially repeat the argument from the Hardy space case. In view
of \eqref{Spread}, in order to apply Theorem \ref{StegThm}, it is sufficient to show 
that the condition of Theorem \ref{CarlAlpha1} implies almost surely
\[
 \sum_{j=1}^n\mu(S_{I_j})\le c\Capa_{\alpha}(I),
\]
where $I$ is an interval of length $\sum_{j=1}^n|I_j|$. 
Since the distribution of points in $S_J$ for an arc $J$ does not depend on its position on
$\TT$, we can assume that $I_j$ are adjacent intervals. Then, setting 
$I=\bigcup_{j=1}^n I_j$ we have
\[
 \bigcup_{j=1}^n S_{I_j}\subset S_I,
\]
and for the application of Stegenga's theorem it is enough to show that almost surely
\begin{eqnarray}\label{1BoxSteg}
 \mu(S_I)\le c\Capa_{\alpha}(I).
\end{eqnarray}
In view of our discussion on $\mathcal{D}$ we should emphasize that the above discussion
is true for every $\alpha\in (0,1]$.\\

Consider now the case $\alpha\in (0,1)$. Then \eqref{1BoxSteg} becomes
\[
 \sum_{\lambda\in S_I}(1-|\lambda|^2)^{1-\alpha}\le c|I|^{1-\alpha}.
\]
As in the Hardy space, it is enough to discuss this inequality for dyadic intervals 
$I=I_{n,k}$, and covering again $S_{n,k}$ by dyadic arcs we obtain the 
following random variable
\[
 Y_{n,k}^{\alpha}=2^{(1-\alpha)n}\sum_{m\ge n}2^{-(1-\alpha)m}X_{n,m,k}.
\]

We now get using again $ x(a^{1/x}-1)\leq a$ for $x\ge 1$, $a>0$, 
	\begin{align*}
	\log P(Y^{\alpha}_{n,k} \geq A ) & =  \log P (s^{Y_{n,k}} \geq s^A) 
	 \leq \log \big(\frac{1}{s^A}  G_{Y_{n,k}}(s) \big ) \\
	& = \sum_{m \geq n}N_m\log(1-2^{-n}+2^{-n}s^{2^{(1-\alpha)(n-m)}})-A\log(s) \\
	& \leq 2^{- n} \sum_{m \geq n}N_m(s^{2^{(1-\alpha)(n-m)}}-1)-A\log(s) \\
	& \leq 2^{-\alpha n} \sum_{m \geq n}N_m2^{-(1-\alpha)m}
 \underbrace{2^{(1-\alpha)(m-n)}(s^{2^{(1-\alpha)(n-m)}}-1)}_{\le s}-A\log(s) \sout{.}\\
 &\leq 2^{-\alpha n}sc_n- A \log(s),
	\end{align*}
where $c_n=\sum_{m\ge n}2^{-(1-\alpha)m}N_m$ is the value of the remainder sum which tends to zero, and which we thus
can assume less than 1.
	Now set $ s=2^{\alpha n} , A= \frac{4}{\alpha}$ in the last inequality to get
	\[
	\log P(Y_{n,k} \geq A ) \leq 1-2n\log(2). 
	\]
	Hence, $ P(Y_{n,k} \geq \frac{4}{\alpha} ) \leq C(\alpha) 2^{-2n} $,
and we conclude as in the Hardy space case.
\end{proof}

We are now in a position to prove Theorem \ref{CoroThm5}.

\begin{proof}[Proof of Theorem \ref{CoroThm5}]
(i) Let  $0<\alpha<1/2$. 

If $\Lambda$ is interpolating almost surely, then it is separated almost surely, which 
implies $\sum_n 2^{-n}N_n^2<+\infty$.

If $\sum_n 2^{-n}N_n^2<+\infty$, then $\Lambda$ is almost surely separated. 
Moreover, the condition implies that $N_n\le c2^{n/2}$ for some constant $c>0$, 
and hence 
\[
\sum_{n} 2^{-(1-\alpha)}N_n\le \sum_{n} 2^{-(1/2-\alpha)}<+\infty.
\]
Theorem \ref{CarlAlpha1} implies that $\mu_{\Lambda}$ is almost surely a Carleson
measure so that $\Lambda$ is almost surely interpolating.

The remaining cases of a divergent sum and interpolation with zero probability follows
from the above and the Kolomogorov 0-1 law.
\\

(ii) Consider the case $1/2\le \alpha<1$.

If $\Lambda$ is interpolating almost surely, then it is a zero sequence almost surely, which
implies $\sum_n 2^{- (1-\alpha)n}N_{n}<+\infty$ by Theorem \ref{alphazero}.

Suppose $\sum_n 2^{- (1-\alpha)n}N_{n}<+\infty$. 
Again by Theorem \ref{CarlAlpha1} we that $\mu_{\Lambda}$ is almost surely a Carleson
measure.

The condition clearly implies also that $\sum_{n}2^{-n}N_n^{2}<+\infty$, which further 
yields that the sequence is almost surely separated. Hence it is almost surely an
interpolating sequence.

Agian, the remaining cases of a divergent sum and interpolation with zero probability follows
from the above and the Kolomogorov 0-1 law.

\end{proof}


\section{Separated random sequences for the Dirichlet space}

We will now prove the separation result in $\cD$.

\begin{proof}[Proof of Theorem \ref{Dsepare}]
{\it Separation with probability 0}. Assume that for all $ \gamma \in (1/2,1)$ we have 
$\sup_k 2^{-\gamma k }N_{k}^{2} =\infty$.
As it turns out, under the condition of the Theorem, separation already fails in dyadic annuli (without 
taking into account radial Dirichlet separation).

Assume  now that $\gamma_l\to 1 $ as $l\to \infty$ and $\sup_k2^{-\gamma_lk }N_{k}^{2} =\infty$ for every $l$. For each $k=1,2\ldots$, let $I_k=[1-2^{-k+1},1-2^{-k})$.
Define
$$\Omega_k^{(l)}=\{\omega\text{ : } \exists (i,j), \; i\neq j \text{ with } \rho_i,\rho_j\in I_k \text{ and } |\theta_i(\omega)-\theta_j(\omega)|\leq \pi 2^{-\gamma_l k}\}.$$

In view of \eqref{sepD}, if $\omega\in \Omega_k^{(l)}$,
this means that in the dyadic annulus $A_k$ there are at least two points
close in the Dirichlet metric. To be more precise, if $\omega\in \Omega_k^{(l)}$, then
there is a pair  of distinct point $\lambda_i(\omega)$ and $\lambda_j(\omega)$ such that $|\lambda_i|,|\lambda_j|\in I_k$ and $|\arg\lambda_i(\omega)-\arg\lambda_j(\omega)|\leq \pi 2^{-\gamma_l k}$. Hence
$$
\frac{(1-|\lambda_i|^2)(1-|\lambda_j|^2)}{|1-\overline{\lambda_i}\lambda_j|^2} \geq c
\frac{2^{-2k}}{2^{-2k}+ \pi 2^{-2\gamma_l k}},
$$
where the constant $c$ is an absolute constant. Hence
$$
\frac{(1-|\lambda_i|^2)(1-|\lambda_j|^2)}{|1-\overline{\lambda_i}\lambda_j|^2} 
\ge c\frac{1}{1+\pi 2^{k(1-\gamma_l)}}
 \ge c' 2^{-k(1-\gamma_l)}\ge c''(1-|\lambda_i|)^{1-\gamma_l}.
$$
Absorbing $c''$ into a suitable change of the power $\delta_l^2:=1-\gamma_l$ into 
${\delta_l'}^2$ (which can be taken
by choosing for instance $2\delta_l>\delta_l'>\delta_l$ provided $k$ is large enough), then
by \eqref{sepD}
$$
 \rho_\cD (\lambda_i(\omega),\lambda_j(\omega))\le \delta'_l.
$$
Our aim is thus to show that for every $l\in\NN$, we can find almost surely $\lambda_i(\omega)
\neq \lambda_j(\omega)$ such that 
$\rho_\cD (\lambda_i(\omega),\lambda_j(\omega))\le \delta'_l$, i.e. $P(\Omega_k^{(l)})=1$. (Note that $\delta'_l\to 0$ when $l\to+\infty$.)
\\

Let us define a set $ E:=\{j:2^{-\gamma_lj-1}N_j\le 1\}$. 
Observe that when $k\notin E$, 
then at least two points are closer than $\pi 2^{-\gamma_l k}$ (this is completely deterministic), so that in that case $P(\Omega_k^{(l)})=1$. Hence if $E\subsetneqq \NN$, then we are done.

Consider now the case $E=\NN$, and let $k\in E=\NN$.
We will use the Lemma on the probability of an uncrowded road \cite[p. 740]{C}, which states
$$P(\Omega_k^{(l)})=1-(1-N_k 2^{-\gamma_l k-1})^{N_k-1}$$
(since $E=\NN$ this is well defined).

We can assume that $N_k\ge 2$ (since obviously
$\sum_{k:N_k<2}2^{-\gamma_l k }N_k
<\infty$). In particular $N_k^2/2\le N_k(N_k-1)\le N_k^2$.
Since $\log(1-x)\leq -x$, we get
\begin{eqnarray*}
\sum_{k:N_k\ge 2} (N_k-1)\log (1- N_{k} 2^{-\gamma_l k-1})
 &\le& -\sum_{k:M_k\ge 2} (N_k-1) N_{k} 2^{-\gamma_l k-1} \\
 &\le& -\frac{1}{2}\sum_{k:N_k\ge 2} N_k^2 2^{-\gamma_l k-1} \\
 &=&-\infty
\end{eqnarray*}
by assumption.
Hence, taking exponentials in the previous estimate,
$$\prod_{k\in E,N_k\ge 2} (1- N_{k} 2^{-\gamma_l k-1})^{N_k-1}=0,$$ 
which implies, by results on convergence on infinite products, that
$$\sum_kP(\Omega_k^{(l)})=\infty.$$
Since the events $\Omega_k^{(l)}$ are independent, by  the Borel--Cantelli Lemma, 
$$P(\limsup \Omega_k^{(l)})=1,$$ where 
$$\limsup \Omega_k^{(l)}=\bigcap_{n\geq 1}\bigcup_{k\geq n} \Omega_k^{(l)}=
\{\omega\text{  : }\omega\in \Omega_k^{(l)} \text{ for infinitely many } k\}.$$
In particular, since the probability of being in infinitely many $\Omega_k^{(l)}$ is one,
there is at least one $\Omega_k^{(l)}$ which happens with probability one. So that
again $P(\Omega_k^{(l)})=1$.
\\

As a result, the probability that the sequence is $\delta_l'$-separated in the Dirichlet
metric is zero for every $l$. Since $\delta_l'\to 0$ when $l\to+\infty$, we deduce that

$$P(\omega\text{ : }\{\lambda(\omega) \} \text{ is  separated for }\cD)=0.$$
\\

{\it Separation with probability 1.}
Now we 
assume that $\sum_k 2^{-\gamma k}N_k^2<+\infty$  for some $\gamma\in (1/2,1)$.
Let us begin defining a neighborhood in the Dirichlet
metric. For that, fix $\eta>1$ and $\alpha\in (0,1)$. Given $\lambda\in \Lambda$,
so that for some $k$, $\lambda\in A_k$.
Consider
\[
 T_{\lambda}^{\eta,\alpha}=\{z=r{\rm e}^{it}: (1-|\lambda|)^{\eta}\le 1-r\le 
  (1-|\lambda|)^{1/\eta}, |\theta-t|\le (1-r)^{\alpha}\}.
\]
Figure \ref{fig1} represents the situation.

\begin{figure}[t]                                                       %
 \includegraphics{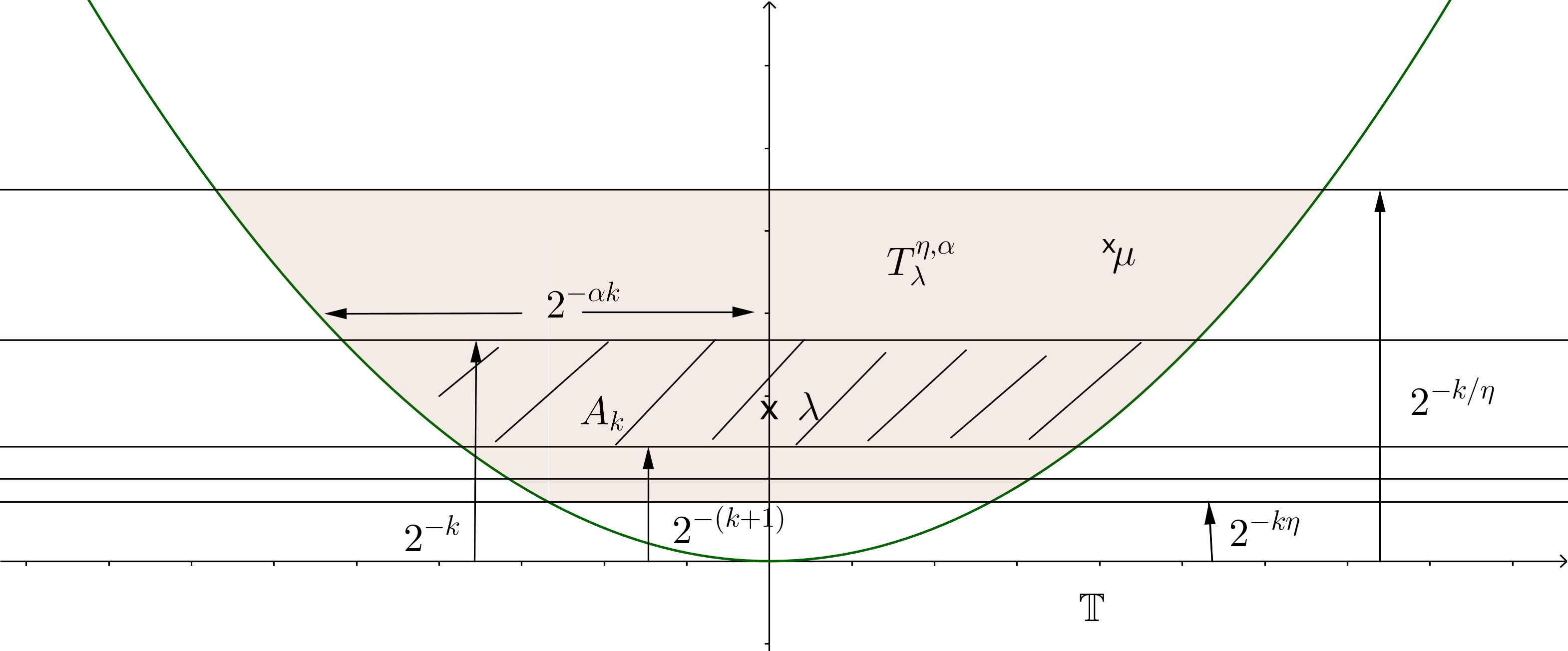} 
   \caption{Dirichlet neighborhood.}\label{fig1}                                     %
\end{figure}

Our aim is to prove that under the condition $\sum_k 2^{-\gamma k}N_k^2<+\infty$, 
there exists
$\eta>1$ and $\alpha\in (0,1)$ such that $T_{\lambda}^{\eta,\alpha}$ does not contain any
other point of $\Lambda$ except $\lambda$, and this is true for every $\lambda\in\Lambda$
with probability one. For this we need to estimate
\[
 P(T_{\lambda}^{\eta,\alpha}\cap\Lambda=\{\lambda\}).
\]

Let us cover
\[
 T_{\lambda}^{\eta,\alpha}= \bigcup_{j=k/\eta}^{\eta k}(T_{\lambda}^{\eta,\alpha}\cap A_j),
\]
and we need that for every $j\in [k/\eta,\eta k]\setminus\{k\}$,
$(T_{\lambda}^{\eta,\alpha}\cap A_j)\cap\Lambda=\emptyset$ and
$(T_{\lambda}^{\eta,\alpha}\cap A_k)\cap\Lambda=\{\lambda\}$.
Note that $X_j=\#(T_{\lambda}^{\eta,\alpha}\cap A_j\cap\Lambda)\sim B(N_j,2^{-\alpha j})$,
$j\ne k$, and $X_k\sim B(N_k-1,2^{-\alpha k})$ (since we do not count $\lambda$ in the
latter case). Hence, since the arguments of the points are independent, we have
\begin{align*}
 P(T_{\lambda}^{\eta,\alpha}\cap\Lambda=\{\lambda\})
 &= P\Big(\big(\bigcap_{j=k/\eta,j\neq k}^{\eta k}(X_j=0)\big)\cap (X_k=1)\Big)\\
 &=\prod_{j=k/\eta,j\neq k}^{j=\eta k} \Big(P(X_j=0)\Big)\times P(X_k=1).
\end{align*}
From the binomial law we have $P(X_j=0)=(1-2^{-\alpha j})^{N_j}$, for $j\in [k/\eta,\eta k]\setminus\{k\}$.
Also,  assuming $0<\gamma<\alpha<1$, we have $N_j2^{-\alpha j}=
o(j)$, so that 
\[
 P(X_j=0)=(1-2^{-\alpha j})^{N_j}\sim 1-N_j2^{-\alpha j}.
\]
Moreover 
\[
 P(X_k=1)=N_k2^{-\alpha k}(1-2^{-\alpha k})^{
N_k-1}\sim N_k2^{-\alpha k}. 
\]
Hence we get
\begin{align*}
 P(T_{\lambda}^{\eta,\alpha}\cap\Lambda=\{\lambda\})
&\sim\exp\Big( \sum_{j=k/\eta,j\neq k}^{j=\eta k} \ln (P(X_j=0)\Big)\times N_k2^{-\alpha k}\\
 &\sim \left(1-\sum_{j=k/\eta,j\neq k}^{j=\eta k}N_j2^{-\alpha j}\right)\times N_k2^{-\alpha k}.
\end{align*}
Again we use $\gamma<\alpha<1$ to see now that the sum $\sum_{j=k/\eta,j\neq k}^{\eta k}N_j2^{-\alpha j}$ is convergent and goes to zero when $k\to\infty$. This shows in particular that the 
fact of considering the event of having points in neighboring annuli of $A_k$ containing
$\lambda$ can be neglected.
Hence
\[
 P(T_{\lambda}^{\eta,\alpha}\cap\Lambda=\{\lambda\})\sim N_k2^{-\alpha k}.
\]
We now sum over all $\lambda\in\Lambda$ by summing over all dyadic annuli $A_k$ and the 
$N_k$ points contained in each annuli:
\[
 \sum_{\lambda\in\Lambda}P(T_{\lambda}^{\eta,\alpha}\cap\Lambda=\{\lambda\})
 \sim\sum_{k\in\NN} N_k\times N_k2^{-\alpha k}
 =\sum_{k\in\NN}N_k^22^{-\alpha k}.
\]
For $\alpha>\gamma$, this sum converges by assumption. 
Using the Borel-Cantelli lemma we deduce
that $T_{\lambda}^{\eta,\alpha}\cap\Lambda=\{\lambda\}$ for all but finitely many
$\lambda$ with probability one. Obviously these finitely many neighborhoods $T_{\lambda}^{\eta,
\alpha}$ contain finitely many points between which a lower Dirichlet distance exists.
This achieves the proof of the separation.
\end{proof}

It should be observed that the above proof only involves $\alpha$ but not $\eta$, so that
it is the separation in the annuli which dominates the situation.


\section{Proof of Theorem \ref{sintD}} 
\label{SectionIntD}

In order to prove Theorem  \ref{sintD} we will use a method similar to the one for $\cD_{\alpha}$.

Let us again observe that interpolation implies the zero sequence condition,
so that by Bogdan's result, if $\Lambda$ is almost surely interpolating then 
$\sum_{n}N_n/n<+\infty$.

So suppose now $\sum_{n}N_n/n<+\infty$. As mentioned 
in the proof of Theorem \ref{CoroThm5}, it is enough to check that we almost
surely have \eqref{1BoxSteg}, now with $\alpha=1$. So the condition translates to
\[
 \sum_{\lambda\in S_I}\left(\log\frac{e}{1-|\lambda|}\right)^{-1}
 \le c \left(\log \frac{1}{|I|}\right)^{-1},
\] 
which, using the usual dyadic discretization $I=I_{n,k}$, translates to 
\[
\sum_{m\geq n} \frac{1}{m}X_{n,m,k} \leq C \frac{1}{n} \,\,\, \text{almost surely}.
\]
This leads to estimate the tail of the random variables
\[
Y_{n,k}=\sum_{m\geq n}\Big(\frac{n}{m}\Big) X_{n,m,k}.
\]
To do that, introduce again two positive parameters $ s, A $. Using the formula for the generating function of a binomially distributed random variable and Markov's inequality we can estimate as follows 

\begin{align*}
\log P(Y_{n,k} \geq A ) &= \log P(s^{Y_{n,k}}>s^A) \\
&\leq \sum_{m\geq n} N_m \log(1-2^{-n}+2^{-n}s^{ (\frac{n}{m}) })-A\log(s) \\
&\leq 2^{-n}\sum_{m\geq n}\frac{N_m}{m} \big( \frac{m}{n} \big) (s^{ (\frac{n}{m}) }-1) n - A\log(s)\\
&\leq n 2^{-n} s \sum_{m\geq n} \frac{N_m}{m}-A\log(s).
\end{align*}

Setting  $ s=2^{n/2} $ and $ A=4 $ the above calculation gives 
\[
P(Y_{n,k}>4) \lesssim C 2^{-2n}.
\]
Again, an application of the Borel-Cantelli Lemma concludes the proof.

As already mentioned in preceding cases, the Kolmogorov 0-1 law allows to get
also the cases of a divergent sum and interpolation with probability 0.

\section{Annex : Proof of Theorem \ref{alphazero} }\label{A}

Carleson proved in \cite[Theorem 2.2]{Ca} that,  for $0<\alpha<1$, if
\[
\sum_{\lambda\in \Lambda} (1-|\lambda|)^{1-\alpha}<\infty
\]
then the Blaschke product $B$ associated  to $\Lambda$  belongs to $\cD_\alpha$.
So the sufficiency part of Theorem \ref{alphazero} follows immediately from this result
(and is moreover deterministically true).
\\

For the proof of the converse  
we will need the following two lemmas. 
The first one is a version of the Borel-Cantelli Lemma \cite[Theorem 6.3]{Bil}.
\begin{lem}\label{BoCanV2}If  $\{A_n\}$ is a sequence of measurable subsets in a probability space $(X,P)$ such that $\sum P(A_n) =\infty$ and
\begin{equation}\label{bocanV1}
\liminf_{n\to \infty}\frac{\sum_{j, k\leq n}P(A_j\cap A_k)}{[\sum_{k\leq n}P(A_k)]^2}\leq 1,
\end{equation}
then $P(\limsup_{n\to \infty} A_n)=1$.
\end{lem}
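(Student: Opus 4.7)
The plan is to apply the second moment method. Setting $S_n=\sum_{k=1}^n \mathbf{1}_{A_k}$, one has $\EE[S_n]=\sum_{k\leq n}P(A_k)$ and $\EE[S_n^2]=\sum_{j,k\leq n}P(A_j\cap A_k)$. The Cauchy--Schwarz inequality applied in the form $\EE[S_n]^2=\EE[S_n\mathbf{1}_{\{S_n>0\}}]^2\leq \EE[S_n^2]\,P(S_n>0)$ yields the Paley--Zygmund-type bound
\[
P(S_n>0)\geq \frac{\bigl(\sum_{k\leq n}P(A_k)\bigr)^2}{\sum_{j,k\leq n}P(A_j\cap A_k)}.
\]
By hypothesis \eqref{bocanV1}, the right-hand side has $\limsup\geq 1$, so taking limits and using monotonicity gives $P\bigl(\bigcup_{k=1}^{\infty}A_k\bigr)=1$.

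To upgrade from ``some $A_k$ occurs'' to ``infinitely many $A_k$ occur'', I would repeat the argument on every tail $(A_k)_{k\geq N}$. Writing $a_n=\sum_{k\leq n}P(A_k)$ and $b_n=\sum_{j,k\leq n}P(A_j\cap A_k)$, the first hypothesis transfers immediately since $\sum_{k\geq N}P(A_k)=\infty$. For the second, the tail numerator $\sum_{N\leq j,k\leq n}P(A_j\cap A_k)$ differs from $b_n$ by at most $2(N-1)\,a_n$, while the tail denominator equals $(a_n-a_{N-1})^2$, which is asymptotic to $a_n^2$ because $a_n\to\infty$. Hence the tail ratio is bounded above by $b_n/a_n^2$ up to a multiplicative factor tending to $1$ and an additive term of order $N/a_n\to 0$, so its $\liminf$ is still $\leq 1$. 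Applying the first step to the tail, $P\bigl(\bigcup_{k\geq N}A_k\bigr)=1$ for each $N$, and a countable intersection yields $P(\limsup_n A_n)=1$.

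The main place where care is needed is the tail transfer: one must argue cleanly that the $O(N\cdot a_n)$ correction in the numerator and the $a_{N-1}$ shift in the denominator do not spoil the control provided by \eqref{bocanV1}. This is essentially bookkeeping, made painless by the fact that $a_n\to\infty$ (a consequence of the divergence hypothesis). Aside from this, the proof is a textbook second-moment argument and I do not anticipate any genuine analytic obstacle.
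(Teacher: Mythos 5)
Your argument is correct: the Paley--Zygmund/Cauchy--Schwarz bound $P(S_n>0)\geq \EE[S_n]^2/\EE[S_n^2]$, the transfer of both hypotheses to the tails $(A_k)_{k\geq N}$ (where the divergence $a_n\to\infty$ makes the $O(N a_n)$ correction to the numerator and the $a_{N-1}$ shift in the denominator harmless), and the countable intersection over $N$ all go through as you describe. The paper does not prove this lemma at all --- it cites it as a known generalization of the Borel--Cantelli lemma (Billingsley, Theorem 6.3, i.e.\ the Erd\H{o}s--R\'enyi/Kochen--Stone second-moment version) --- and your proof is essentially the standard textbook argument behind that citation, so there is nothing to reconcile.
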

The second Lemma is due to Nagel, Rudin and Shapiro \cite{NRS, NRS2} who discussed tangential approach regions of functions in $\cD_\alpha$.
\begin{lem}\label{NRST}Let $f\in \cD_\alpha$, $0<\alpha<1$. Then, for a.e. $\zeta\in \TT$, we have $f(z)\to f^*(\zeta)$ as $z\to \zeta$ in each region
\[
 |z-\zeta|<\kappa (1-|z|)^{1-\alpha},\qquad (\kappa>1).
\]
\end{lem}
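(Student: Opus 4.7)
The plan is to prove this tangential--convergence theorem of Nagel--Rudin--Shapiro via a tangential area function argument tailored to the weight $(1-|z|)^{1-\alpha}$ that defines the $\cD_\alpha$ norm. The key geometric fact driving everything is that for every $w \in \DD$,
\[
\bigl|\{\zeta \in \TT : w \in \Omega_\kappa(\zeta)\}\bigr| \simeq (1-|w|)^{1-\alpha},
\]
where $\Omega_\kappa(\zeta) := \{z \in \DD : |z-\zeta| < \kappa(1-|z|)^{1-\alpha}\}$ is the tangential approach region at $\zeta$. Because this exactly matches the $\cD_\alpha$ weight, Fubini applied to the tangential area function
\[
Sf(\zeta) := \left(\int_{\Omega_\kappa(\zeta)} |f'(z)|^2\, dA(z)\right)^{1/2}
\]
yields $\|Sf\|_{L^2(\TT)}^2 \lesssim \|f\|_\alpha^2$, so $Sf(\zeta) < \infty$ for almost every $\zeta \in \TT$.

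The next step is to upgrade finiteness of $Sf(\zeta)$ into the existence of a tangential limit at $\zeta$. Fix such a $\zeta$ and decompose $\Omega_\kappa(\zeta)$ into Whitney-type cells of radial thickness $\simeq 2^{-n}$ and angular width $\simeq 2^{-(1-\alpha)n}$, matching the tangential scaling of the region. Any two points $z,z' \in \Omega_\kappa(\zeta)$ with $|z|,|z'| \ge 1-2^{-n_0}$ may be joined by a rectifiable path $\gamma_{z,z'}$ that crosses $O(1)$ cells per dyadic level, stays inside $\Omega_{\kappa'}(\zeta)$ for a slightly enlarged $\kappa'>\kappa$, and has total length $\lesssim 2^{-(1-\alpha)n_0}$. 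Cauchy--Schwarz gives
\[
|f(z)-f(z')|^2 \le |\gamma_{z,z'}| \int_{\gamma_{z,z'}} |f'(w)|^2\,|dw|,
\]
and averaging the line integral over each Whitney cell (of area $\simeq 2^{-n}\cdot 2^{-(1-\alpha)n}$) converts the $|dw|$-integral back into the unweighted $dA$-integral appearing in $Sf$, so that
\[
|f(z)-f(z')|^2 \lesssim \int_{\Omega_{\kappa'}(\zeta)\cap\{|w|>1-2^{-n_0+1}\}} |f'(w)|^2\, dA(w) \longrightarrow 0
\]
as $n_0\to\infty$, since $Sf(\zeta) < \infty$. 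The Cauchy criterion then produces a tangential limit $L(\zeta)$; because the radial ray $r \mapsto r\zeta$ lies in $\Omega_\kappa(\zeta)$ and $\cD_\alpha \subset H^2$ ensures radial boundary values a.e., we get $L(\zeta) = f^*(\zeta)$ for a.e. $\zeta$.

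The main technical obstacle will be engineering the Whitney decomposition together with the cell-to-path Poincar\'e-type estimate so that the line-to-area averaging produces precisely the weight-free $dA$ used in $Sf$. Concretely, the cell dimensions $2^{-n} \times 2^{-(1-\alpha)n}$ must interact with the path length $|\gamma_{z,z'}| \simeq 2^{-(1-\alpha)n_0}$ so that the products telescope into the tangential area integral; this is exactly where the aperture exponent $1-\alpha$ is essential and also explains the restriction $0 < \alpha < 1$ (the case $\alpha = 0$ collapses to non-tangential cones and the classical Privalov/Plessner theorem, while $\alpha = 1$ forces a logarithmic aperture requiring a separate argument).
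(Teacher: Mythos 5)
First, a point of reference: the paper does not prove this lemma at all --- it is quoted verbatim from Nagel--Rudin--Shapiro \cite{NRS, NRS2} and used as a black box in the proof of Theorem \ref{alphazero}. So you are supplying a proof where the paper supplies none. Your first step is correct and is the natural starting point: since $|\{\zeta\in\TT:\ w\in\Omega_\kappa(\zeta)\}|\simeq(1-|w|)^{1-\alpha}$ for $0<\alpha<1$, Fubini gives $\int_\TT Sf(\zeta)^2\,|d\zeta|\lesssim\|f\|_\alpha^2$, so the tangential area integral is finite a.e.

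The gap is in the second step, and it is structural rather than cosmetic. The averaged Cauchy--Schwarz bound has the form
\[
|f(z)-f(z')|^2\ \le\ |\gamma|\sum_{Q}\frac{1}{w_Q}\int_Q|f'|^2\,dA ,
\]
where $w_Q$ is the width, transverse to the path, over which the path can be translated inside the cell $Q$; to land on the unweighted area integral you need $|\gamma|/w_Q=O(1)$ for every cell met. But $|\gamma|\simeq 2^{-(1-\alpha)n_0}$ (the angular width of the region at the top level), while a cell at level $n>n_0$ has dimensions $2^{-n}\times 2^{-(1-\alpha)n}$, both of which are much smaller than $2^{-(1-\alpha)n_0}$; hence $|\gamma|/w_Q\ge 2^{(1-\alpha)(n-n_0)}\to\infty$, and at the terminal cell containing $z$ no averaging is possible at all, so subharmonicity of $|f'|^2$ yields the even worse factor $2^{\,n_1-(1-\alpha)n_0}\ge 2^{\alpha n_0}$. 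The products do not telescope; they diverge. A scaling test shows the target inequality itself is off: if $|f'|\simeq c$ on the layer of $\Omega_\kappa(\zeta)$ at depth $2^{-n_0}$ and $z,z'$ sit at its two angular ends, then $|f(z)-f(z')|^2\simeq |c|^2 2^{-2(1-\alpha)n_0}$ while the unweighted area integral over that layer is $\simeq |c|^2 2^{-(2-\alpha)n_0}$, a discrepancy of $2^{\alpha n_0}$. This reflects a genuine phenomenon: finiteness of a square function at a point does not yield convergence at that point by any pointwise path argument; the passage from ``$Sf<\infty$ a.e.'' to ``limits exist a.e.'' requires a global device --- either a sawtooth/stopping-time argument in the style of Marcinkiewicz--Zygmund--Spencer, or (as Nagel--Rudin--Shapiro actually do) an $L^2$ or weak-type bound for the tangential maximal operator $\zeta\mapsto\sup_{z\in\Omega_\kappa(\zeta)}|f(z)|$ applied to potentials of $L^2$ densities, followed by the standard density argument (polynomials are dense in $\cD_\alpha$ and converge everywhere). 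Your second step would need to be replaced by one of these.
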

\begin{proof}[Proof of Theorem \ref{alphazero}]
In view of our preliminary observations, we are essentially interested in the converse
implication.
So suppose $ \sum_n 2^{-(1-\alpha)n}N_n=+\infty$ or equivalently
\begin{equation}\label{diverC}
  \sum_n (1-\rho_n)^{1-\alpha}=+\infty.
\end{equation}
We have to show that $\Lambda$ is 
not a zero sequence almost surely. For this,
introduce the intervals $I_{\ell}=({\rm e}^{-i(1-\rho_{\ell})^{1-\alpha}}, {\rm e}^{i(1-\rho_{\ell})^{1-\alpha}})$ 
and let $F_{\ell}={\rm e}^{i\theta_{\ell}}I_{\ell}$.
Denoting by $m$ normalized Lebesgue measure on $\TT$,
observe that 
\[
 m(F_{\ell})=m(I_{\ell})=(1-\rho_{\ell})^{1-\alpha}.
\]
We have for every $\varphi\in F_{\ell}$,
$\lambda_{\ell} \in \Omega_{\kappa,\varphi}=\{z\in \DD : |z-{\rm e}^{i\varphi}|<\kappa (1-|z|)^{1-\alpha}\}.$ By Lemma \ref{NRST} it suffices to prove that $|\limsup_{\ell}F_{\ell}|>0$ a.s.\ (the latter condition
means that there is a set of strictly positive measure on $\TT$ to which $\Lambda$ accumulates in Dirichlet  tangential approach regions according to Lemma \ref{NRST}, which is of course not
possible for a zero sequence). 
Let $E$ denote the expectation with respect to the Steinhaus sequence $(\theta_n)$.
By Fubini's theorem we have $E[m(F_j\cap F_k)]=m(I_j)m(I_k)$, $j\neq k$, (the expected size
of intersection of two intervals only depends on the product of the length  of both intervals). By Fatou's Lemma and \eqref{diverC}
\begin{align*}
E\Big[\liminf_{n\to \infty}\frac{\sum_{j, k\leq n}m(F_j\cap F_k)}{[\sum_{k\leq n}m(F_k)]^2}\Big]&\leq \liminf_{n\to \infty}E\Big[\frac{\sum_{j, k\leq n}m(F_j\cap F_k)}{[\sum_{k\leq n}m(F_k)]^2}
 \Big]\\
&=\liminf_{n\to \infty}\frac{\sum_{j, k\leq n}E[m(F_j\cap F_k)]}{[\sum_{k\leq n}m(F_k)]^2}\\
&=\liminf_{n\to \infty}\frac{\sum_{j,k\leq n, j\neq k}m(I_j)m(I_k)+\sum_{k\leq n} m(I_k)}{[\sum_{k\leq n}m(I_k)]^2}\\
&=\liminf_{n\to \infty}\left(1+\frac{\sum_{k\leq n} m(I_k)(1-m(I_k))}{[\sum_{k\leq n}m(I_k)]^2}
 \right).
\end{align*}
Now, since $1-m(I_k)\to 1$, and by \eqref{diverC}, keeping in mind that $m(I_k)=(1-\rho_k)^{1-\alpha}$, we have
\[
 \lim_{n\to\infty} \frac{\sum_{k\leq n} m(I_k)(1-m(I_k))}{[\sum_{k\leq n}m(I_k)]^2}=0.
\]
This implies that \eqref{bocanV1} holds on a set $B$ of positive probability and hence, by the
zero-one law, on a set of probability one.
From Lemma \ref{BoCanV2} we conclude $P(\limsup_{n\to \infty} F_n)=1$ a.s., which is what
we had to show.
 \end{proof}

\end{document}